\documentclass[10pt]{article}

\pagestyle{plain}

\usepackage[english]{babel}
\usepackage[utf8]{inputenc} 
\usepackage{amsfonts}
\usepackage[square,sort,comma,numbers]{natbib}
\bibliographystyle{abbrvnat}
\bibpunct{[}{]}{;}{a}{,}{,}
\usepackage{hyperref}

\hfuzz = 0.3pt

\usepackage{mathtools}
\usepackage{amsthm}
\usepackage{amssymb}
\usepackage{amsmath}
\usepackage{needspace}
\usepackage{etoolbox}
\usepackage{lipsum}
\usepackage{authblk}
\usepackage{color,soul}
\usepackage{blindtext}
\usepackage[svgnames]{xcolor} 
\usepackage{xcolor}

\newcounter{theo}
\newtheorem{lmm}[theo]{Lemma}
\newtheorem{thr}{Theorem}

\newtheorem{crl}[theo]{Corollary}
\newtheorem{prp}[theo]{Proposition}

\newtheorem{defn}[theo]{Definition}
\newtheorem{rmrk}[theo]{Remark}

\DeclareMathOperator*{\argmax}{arg\,max}

\newcommand{\R}{\mathbb{R}}

\newcommand{\N}{\mathbb{N}}
\newcommand{\Z}{\mathbb{Z}}

\newcommand{\Prob}{\mathbb{P}}

\newcommand{\pr}{\eta}
\newcommand{\bd}{\partial}

\newcommand{\hm}{\omega}

\newcommand{\cp}{\mathrm{Cap}}
\newcommand{\rad}{\mathrm{R}}

\newcommand{\eqv}{\asymp}
\newcommand{\E}{\mathbb{E}}

\title{How long are the arms in DBM?}

\author{Ilya Losev 
\thanks{losevilya14@gmail.com; University of Cambridge, United Kingdom},\,
Stanislav Smirnov 
\thanks{sksmirnov@gmail.com; Universit\'e de Gen\`eve, Geneva, Switzerland}
\thanks{St. Petersburg University, Russia}
\thanks{Skolkovo Institute of Science and Technology, Russia}}

\begin{document}

\maketitle

\begin{abstract}
Diffusion Limited Aggregation and its generalization, Dielectric Breakdown model play an important role in physics, approximating a range of natural phenomena. 
Yet little is known about them, with the famous Kesten's estimate on the DLAs growth being perhaps the most important result. 
Using a different approach we prove a generalisation of this result for the DBM in $\Z^2$ and $\Z^3$. 
The obtained estimate depends on the DBM parameter, and matches with the best known results for DLA.
In particular, since our methods are different from Kesten's, our argument provides a new proof for Kesten's result both in $\Z^2$ and $\Z^3$.
\end{abstract}

\section{Introduction}

Diffusion limited aggregation (DLA) \citep{WitSan} was introduced 
as a model of mineral deposition and electrodeposition and has been a great challenge for mathematicians ever since. This model is believed to exhibit non-equilibrium fractal growth, producing highly irregular, branching fractal clusters. Although this observation comes from numerous simulations (see, e.g. \citep{GrebBel, Meakin}), there are very few rigorous theoretical results explaining these phenomena. This is in sharp contrast to internal DLA, which is known to converge to a disk in shape \citep{LBG}.

Another stochastic growth process, called dielectric-breakdown model (DBM) \citep{NPW}, was introduced as a model of such physical processes as lightnings, surface discharges, and treeing in polymers. It can be viewed as a one-parameter generalisation of DLA. It is conjectured that DBM also produces irregular clusters in a similar manner (see, e.g. \citep{NPW, MMJB}). 

It is believed that some other models, including Hastings-Levitov model \citep{HastingsLevitov}, are in the same universality class as DBM (for recent progress on Hastings-Levitov model see \citep{NST1, NST2, Norris_2012, Sola_2019, Rohde, STV18, STV10}). 
Hastings-Levitov model can also be defined on the upper half-plane, see \citep{Stationary_HL} for rigorous results in this setting.

\medskip

The main questions in the area include finding scaling limits of these models and describing 
their fractal properties. Essentially, the only rigorous result in this direction is due to Kesten \citep{Kesten}, who showed
that DLA clusters are not growing too fast (see \citep{Kesten_higher_dim} and \citep[Chapter 2.6]{LawlerInersections} for generalisations to higher dimensions). 
This result can be informally restated to say that DLA in the plane has fractal dimension at least $3/2$. 

Using an alternative approach we generalize Kesten's estimate to the DBM family with parameter $\pr$, showing that its dimension is at least $2-\pr/2$ in the plane. 
This requires new ideas, as original Kesten's DLA proof does not generalize automatically to DBM. 
Specialising our argument to $\pr=1$ we obtain a new proof of Kesten's estimate in 2 and 3 dimensions, which matches with the best known results for DLA, which are due to Kesten in $\Z^2$ \citep{Kesten} and Lawler in $\Z^3$ \citep[Chapter 2.6]{LawlerInersections} (which is an improvement over the original result \citep{Kesten_higher_dim} in 3~dimensions).
Our approach utilizes the dynamical nature of the process and is based on the connection between growth rate and harmonic measure multifractal spectrum observed in \citep{TauGrowth}.

\medskip
Throughout the paper we will be concerned only with DLA and DBM on $\Z^2$ and $\Z^3$, though methods can be modified to obtain results in the continuous setting of unit particles in $\R^2$ or $\R^3$ if we superimpose a square grid of mesh size smaller than the size of particles.

\subsection{Definitions}
Informally speaking, DLA cluster starts as one point and grows as follows. We take a small particle near infinity and let it perform a random walk until it hits the outer boundary of the existing cluster for the first time, attaching it to the cluster at that point. Then we take a new particle and repeat this process all over again for the new cluster. In other words, on each step a new particle is attached at a point with probability equal to harmonic measure in the complement of the existing cluster as viewed from infinity. Similarly, clusters in DBM with parameter $\pr\geq 0$ grow by randomly attaching particles with probability proportional to harmonic measure raised to the power of $\pr$. Observe that DBM with parameter $\pr=1$ coincides with DLA.

\medskip

Now we pass to the formal definitions of these models.
We write $x\sim y$ if $x$ and $y$ are two adjacent vertices. For $A\subset \Z^{d}$ let 
\begin{equation*}
\bd A = \left\{y \in \Z^{d}: y \notin A, \exists x\in A: x\sim y \right\}, \qquad \overline{A} = A \cup \bd A
\end{equation*}
be the set of neighbours of $A$ and the closure of $A$ respectively.
For a finite $X\subset \Z^d$ and $y\in X$ we write $\hm_X(y)=\hm(y, X)$ for the harmonic measure of lattice cite $y$ in the complement of $X$ as viewed from $\infty$. In other words, $\hm(y, X)$ is the probability that a random walk started infinitely far away, conditioned on hitting $X$, hits $X$ for the first time at the point $y$. 

\begin{defn}
Dielectric-breakdown model with parameter $\pr$ (DBM-$\pr$) on $\Z^d$ is a Markov chain $\left\{A_n\right\}_{n\geq 0}$ of finite connected subsets of $\Z^d$ such that 
\begin{equation*}
\Prob\left[A_{n+1} = C|A_n\right] = \frac{\hm(y, \overline{A}_n)^{\pr}}{\sum_{z\in\bd A_n}\hm(z, \overline{A}_n)^{\pr}}, \qquad \text{if } C = A_n \cup \{y\}, \, y\in \bd A_n,
\end{equation*}
and $A_0 = \{0\}$.
Diffusion limited aggregation (DLA) is defined as $DBM$ with parameter $\pr=1$.
\end{defn}
Note that $\hm(y, \overline{A})$ is supported on $\bd A$.
Also notice that DBM with $\pr=0$ corresponds to the Eden model \citep{Eden60}, when the next particle is attached equally likely at any of the boundary sites (here we use the convention $0^0=1$, so that the cluster can grow at sites on the boundary which are not connected to $\infty$).

\medskip
\pagebreak[3]

The outstanding questions about these models are to describe their scaling limits and find their dimensions or, alternatively, growth rates. 
The growth rate $\beta(\pr)$ of DBM-$\pr$ is defined as
\begin{equation}\label{eqGrowthRateDef1}
\beta(\pr) = \lim_{n\to \infty} \frac{\log \rad(A_n)}{\log n},
\end{equation}
where  $\left\{A_n\right\}_{n \geq 0}$ is a DBM-$\pr$ growing cluster, 
and $\rad(A) = \max_{x\in A}|x|$ is the radius of $A$ with respect to the usual Euclidian distance $|\cdot|$.

One can informally argue that dimension $D(\pr)$ is related to the growth rate by
\begin{equation}
D(\pr) = \frac{1}{\beta(\pr)},
\end{equation}
or, in other words, that $D(\pr)$ is such a number that
\begin{equation}\label{eqHausdDimDef}
 \rad(A_n)^{D(\pr)} \eqv n.
\end{equation} 
To make this argument formal one has to show that dimension is well-defined over different scales.

\medskip

It is conjectured that for every $\pr>0$, the limit in \eqref{eqGrowthRateDef1} exists almost surely, is nontrivial (i.e. $\frac{1}{d}<\beta(\pr)<1$ and $1<D(\pr)<d$), and does not depend on the realization of the DBM-$\pr$ cluster, see \citep{MMJB} for numerical simulations. 

\medskip

The main fact known about DLA is due to Kesten, who has proved that DLA cluster radius cannot grow too fast. 
It is immediate that 
$\frac{1}{2} n^{1/d}<\rad(A_n) < n$. The upper bound was improved in \citep{Kesten}. 
\begin{thr}[\citep{Kesten}] \label{ThrKesten}
Let $\left\{A_n\right\}_{n \geq 0}$ be DLA on $\Z^d$. Then there exists a constant $C<\infty$, such that with probability 1
\begin{align*}
\limsup_{n \to \infty} n^{-2/3}\rad(A_n) &< C, \qquad \text{if } d=2;\\
\limsup_{n \to \infty} n^{-2/d}\rad(A_n) &< C, \qquad \text{if }d\geq 3.
\end{align*}
\end{thr}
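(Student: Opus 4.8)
The plan is to reduce the radius bound to a lower estimate on the number of particles needed to reach a given radius, and then to extract that estimate from a Beurling-type bound on how strongly harmonic measure can concentrate near the tip of a connected cluster. Writing $N(R)=\min\{n:\rad(A_n)\ge R\}$, the two claimed statements are equivalent, up to relabelling the constant, to the almost sure lower bounds $N(R)\gtrsim R^{3/2}$ for $d=2$ and $N(R)\gtrsim R^{d/2}$ for $d\ge 3$, since $\rad$ can increase by at most $\sqrt d$ per particle. I would organise everything along dyadic scales $R=2^k$ and estimate, for each $k$, the number $G_k$ of particles that the chain deposits in the annulus $\{2^{k}\le |y|<2^{k+1}\}$ before the radius first exceeds $2^{k+1}$.

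Two inputs drive the scale-$k$ estimate. First, connectivity: because each $A_n$ is edge-connected with $A_0=\{0\}$, reaching radius $2^{k+1}$ forces a chain of freshly added sites crossing the annulus, so $G_k\ge c\,2^{k}$ deterministically. Second, and crucially, a discrete Beurling-type estimate: if $A$ is connected, $0\in A$ and $y\in\bd A$ with $|y|\eqv R$, then $\hm(y,\overline A)\le C R^{-\gamma_d}$, with $\gamma_2=1/2$ and $\gamma_d=(d-2)/2$ for $d\ge 3$. For DLA, i.e.\ $\pr=1$, the acceptance probability at a site is exactly $\hm(y,\overline A_n)$, so every particle placed at an exposed frontier site is accepted with probability at most $C R^{-\gamma_d}$.

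Granting these inputs, the exponent emerges from a first–moment computation. While $\rad(A_n)\in[2^{k},2^{k+1}]$, each deposition that advances the radius must occur at an exposed site of harmonic measure $\le C\,2^{-k\gamma_d}$, so heuristically the number of advancing depositions among $M$ steps has mean $\lesssim M\,2^{-k\gamma_d}$; needing $\gtrsim 2^{k}$ of them to cross the annulus forces $M\gtrsim 2^{k(1+\gamma_d)}$. Summing the geometric series over $k\le \log_2 R$ gives $N(R)\gtrsim R^{1+\gamma_d}$, that is $R^{3/2}$ for $d\in\{2,3\}$ and $R^{d/2}$ for general $d$. Upgrading the bound at each scale from expectation to a high–probability statement, via a lower–tail estimate for the number of successes or a supermartingale comparison, and then applying Borel–Cantelli along $R=2^{k}$, would yield the almost sure $\limsup$.

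The main obstacle is the harmonic-measure input and, more subtly, its interaction with the dynamics. The quantity one naively wants to control is the total frontier mass $\sum_{|y|>\rad(A_n)}\hm(y,\overline A_n)$, but this is not uniformly small — for a disc-like cluster it is of order one, which is why the $d=2$ exponent $3/2$ genuinely beats the naive $d/2$. The Beurling estimate only bounds the mass at a single exposed site, so the argument must combine it with the connectivity constraint — a thick frontier necessarily comes with many already–deposited particles — rather than with a global frontier bound; this trade-off is exactly the content of the growth-versus-multifractal-spectrum correspondence of \citep{TauGrowth}, and recasting it through the moments $\sum_y \hm(y,\overline A)^{\pr}$ is what should make the same scheme apply to every $\pr\ge 0$. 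The remaining technical points — the discrete Beurling estimate in $\Z^3$, and controlling the evolving geometry so that the per-step acceptance bound can be applied along the whole history rather than at a frozen configuration — are where I expect the real work to lie.
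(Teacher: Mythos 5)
Your overall strategy---reduce to $N(R)\gtrsim R^{1+\gamma_d}$ along dyadic scales, use connectivity to force a crossing chain of $\gtrsim 2^k$ fresh sites, and pay a per-site Beurling price---is the skeleton of Kesten's original argument rather than of this paper's, but as written it has a genuine gap at its quantitative core. The step ``the number of advancing depositions among $M$ steps has mean $\lesssim M\,2^{-k\gamma_d}$'' is false: the Beurling estimate bounds the harmonic measure of each \emph{single} site, while the conditional probability that \emph{some} deposition lands in the annulus (or advances the radius) at a given step is the total frontier mass there, which can be of order one---your own disc example shows $\E G_k$ can be of order $M$, so no lower-tail or supermartingale upgrade of that count can yield $M\gtrsim 2^{k(1+\gamma_d)}$. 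You correctly diagnose the problem in your last paragraph but do not supply the mechanism that resolves it. The resolution, both in \citep{Kesten} and in this paper's proof of Theorem \ref{ThrDiscr2D} (which at $\pr=1$ \emph{is} the paper's new proof of the $d=2$ case), is a union bound over the specific crossing branch: fix the path of $L\geq c\,2^k$ sites (at most $\lesssim R\,(2d-1)^L$ choices) and the set of attachment times (at most $\binom{M}{L}$ choices), bound the probability that this path is filled at these times by the product of per-step conditional probabilities $\leq (C 2^{-k\gamma_d})^L$, and then check that the entropy $\binom{M}{L}(2d-1)^L$ is beaten exactly when $M\leq \widetilde{C} 2^{k(1+\gamma_d)}$; verifying this (in the paper: locating the maximal summand at $L=R$ and summing to $2^{-R}$ before Borel--Cantelli) is where the real work lies and is entirely absent from your sketch.

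Two further points. First, your Beurling input $\gamma_d=(d-2)/2$ is false for $d\geq 5$: the endpoint $y$ of a straight segment of length $R$ in $\Z^d$, $d\geq 4$, has $\hm(y,\overline{A})\eqv \cp(\overline{A})^{-1}\eqv R^{-1}$, which exceeds $R^{-(d-2)/2}$; Kesten's $n^{2/d}$ bound in high dimensions comes instead from $\hm(y,\overline{A})\leq \cp(\overline{A})^{-1}$ combined with an isocapacitary inequality $\cp(\overline{A})\gtrsim |A|^{(d-2)/d}$, i.e.\ the smallness is measured in the particle count, not the radius. (For $d=3$ your $\gamma_3=1/2$ is true but weak; the sharp bound $(\log R)^{1/2}R^{-1}$ of Proposition \ref{BeurlingDiscr3D} is what gives the stronger $n^{1/2}(\log n)^{1/4}$.) Second, even once patched, your route differs from the paper's in the one place the paper is deliberately different: the paper avoids the pointwise Beurling estimate in $d=2$ altogether, deriving from the capacity increment $\cp(\overline{A}_{n+1})-\cp(\overline{A}_n)\eqv \hm^2$ the integral substitute of Corollary \ref{CrlBeurlIntDiscr2D}---a bound $C_1 m^{-1/2}$ on the \emph{geometric mean} of the harmonic measures along the branch---which is exactly as strong as pointwise Beurling inside the union bound, exploits the dynamics, and (together with the Makarov-type Lemma \ref{MakApplDiscr} for the normalizing sum, trivial at $\pr=1$ since $\sum_x \hm(x,\overline{A}_n)=1$) is what lets the same scheme cover all $\pr\in[0,2)$.
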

Essentially, this means that $D(1)\geq 3/2$ in $2$ dimensions, and $D(1)\geq d/2$ in $d\geq 3$ dimensions.
Kesten's Theorem was improved for $d=3$ by Lawler:
\begin{thr}[%
\citep{LawlerInersections}] \label{ThrKestenBetter3D}
Let $\left\{A_n\right\}_{n \geq 0}$ be DLA on $\Z^3$. Then there exists a constant $C<\infty$, such that with probability 1
\begin{equation*}
\limsup_{n \to \infty} n^{-1/2}(\log n)^{-1/4}\rad(A_n)< C.
\end{equation*}
\end{thr}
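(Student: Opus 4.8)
The plan is to follow the increment-by-increment strategy underlying Kesten's Theorem~\ref{ThrKesten}, sharpening its harmonic-measure input to the three-dimensional setting. The starting point is purely geometric: every $y \in \bd A_n$ is adjacent to a site of $A_n$, and $A_n$ has radius $R_n := \rad(A_n)$, so $|y| \le R_n + 1$ for all $y \in \bd A_n$. Hence the radius rises by at most one per particle, $R_{n+1} - R_n \in \{0,1\}$, and
\begin{equation*}
\E\left[R_{n+1} - R_n \mid A_n\right] = p_n := \sum_{y \in \bd A_n,\ R_n < |y| \le R_n + 1} \hm(y, \overline{A}_n),
\end{equation*}
the harmonic measure carried by the outermost sites. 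Telescoping reduces the theorem to controlling how fast the $p_n$ can be summed.

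The crux is a harmonic-measure estimate: for every finite edge-connected $A \ni 0$ of radius $R$,
\begin{equation*}
\sum_{y \in \bd A,\ R < |y| \le R+1} \hm(y, \overline{A}) \ \le\ C\,\frac{\sqrt{\log R}}{R}.
\end{equation*}
I would attack this through the last-exit (capacity) representation $\hm(y, \overline{A}) = \mathrm{es}_{\overline{A}}(y)/\cp(\overline{A})$. For the denominator, $A$ contains a connected path from $0$ to a site at distance $\ge R$, hence a set of diameter $\ge R$, whose capacity in $\Z^3$ is bounded below by $c\,R/\log R$ (the needle being extremal); this is exactly where the logarithm that separates three dimensions from the plane enters. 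For the numerator one must bound the equilibrium charge on the outermost shell by means of Green's-function estimates in $\Z^3$. The delicate point is that the numerator and the capacity must be matched tightly so that the ratio is $\le C\sqrt{\log R}/R$ \emph{uniformly} over connected configurations --- the bound has to survive both for needle-like clusters (small capacity, concentrated charge) and for bulky clusters (large capacity, spread-out charge). This is a genuine improvement over the cruder $\hm \lesssim R^{-1/2}$ that already suffices for Theorem~\ref{ThrKesten}, and extracting the correct power of the logarithm is the main obstacle.

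With $\Phi(R) := C\sqrt{\log R}/R$ in hand I would pass to the almost-sure bound by comparison with a waiting-time problem across dyadic scales. Let $T_L := \min\{n : R_n \ge L\}$. Because the radius rises by at most one per step, passing from radius $L$ to $2L$ requires $L$ distinct increments, and each step produces an increment with conditional probability $p_k \le \Phi(R_k) \le \Phi(L)$ while $R_k \in [L,2L]$. Thus the number of increments in any window of $m := \tfrac12\,L/\Phi(L)$ steps is stochastically dominated by a $\mathrm{Binomial}(m,\Phi(L))$ variable of mean $L/2$, so a Chernoff bound gives that the probability of accumulating $L$ increments in fewer than $m$ steps is at most $e^{-cL}$. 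Hence $T_{2L} - T_L \ge \tfrac12\,L/\Phi(L) \asymp L^2/\sqrt{\log L}$ off an event of probability $e^{-cL}$. Summing the geometrically dominated dyadic increments yields $T_L \gtrsim L^2/\sqrt{\log L}$, and Borel--Cantelli along $L = 2^j$ makes this almost sure for all large $L$; read at $n \asymp T_{R_n}$ this gives $n \gtrsim R_n^2/\sqrt{\log R_n}$, i.e.\ $\limsup_n n^{-1/2}(\log n)^{-1/4}\rad(A_n) < \infty$.

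The step I expect to be genuinely hard is the harmonic-measure estimate, and specifically the sharp power $\sqrt{\log R}$: any slack in matching the capacity lower bound against the Green's-function bound on the shell charge propagates directly into the exponent of $\log n$ in the conclusion, so the two must be controlled in concert rather than separately. By contrast, the passage from the increment estimate to the almost-sure $\limsup$ is technical but routine once the uniform bound $p_k \le \Phi(R_k)$ is available.
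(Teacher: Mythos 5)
Your key lemma is false, and the argument collapses at exactly the step you flagged as the crux. You claim that for every finite edge-connected $A\ni 0$ of radius $R$ the \emph{total} harmonic measure of the outermost shell satisfies $\sum_{y \in \bd A,\ R<|y|\le R+1}\hm(y,\overline{A}) \le C\sqrt{\log R}/R$. Take $A$ to be the lattice ball $\{x\in\Z^3 : |x|\le R\}$: then every $y\in\bd A$ has $R<|y|\le R+1$, so the sum equals the full mass $1$, not $O(\sqrt{\log R}/R)$. The true Beurling estimate in $\Z^3$ (Proposition \ref{BeurlingDiscr3D} in the paper) is \emph{pointwise}: it bounds $\hm(y,\overline{A})$ for a single site $y$, while the outermost shell may contain $\eqv R^2$ sites each carrying $\eqv R^{-2}$. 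So there is no uniform bound $p_k\le \Phi(R_k)$ over connected configurations, the stochastic domination by $\mathrm{Binomial}(m,\Phi(L))$ fails, and the Chernoff/waiting-time step has nothing to stand on. The failure is not a technicality: for ball-like clusters the radius increases at a given step with probability close to $1$, so any proof must exploit time-averaging — a nearly spherical cluster can have $p_n\eqv 1$ momentarily, but cannot sustain growth of one fixed arm — and a bound on the instantaneous increment probability alone can never see this.

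This is precisely why Kesten's argument, and the paper's proof (which obtains this theorem as the $\pr=1$ case of Theorem \ref{ThrDiscr3D}), is organized around \emph{branches} rather than shells: one fixes the path $x_1,\dots,x_L$ ($L\ge R$) realizing the crossing from radius $R$ to $2R$, bounds the probability that this specific path is filled at specified times by $\prod_{j=1}^{L}\hm(x_j,\overline{A}_{k_j})$ (for DLA the normalizing sums are $1$), and union-bounds over the $\binom{M}{L}\,100R\,7^{L}$ choices of times and paths. In the ball example each individual site of the shell — hence each candidate arm — has tiny measure, so the product along one arm is small even though $p_n\eqv 1$; this is the quantity your approach should have targeted. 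The sharp logarithm then comes in exactly the way your "matching" instinct suggests, but integrated along the branch rather than per shell: the capacity-increment identity $\cp^{-1}(\overline{A})-\cp^{-1}(\overline{B})\eqv\hm^2(x,\overline{A})$ (Lemma \ref{LmmCapGrowDiscr3d}) plus the needle bound $\cp(A)\gtrsim \rad(A)/\log\rad(A)$ give Corollary \ref{CrlBeurlIntDiscr3D}, i.e.\ $\bigl(\prod_{j}\hm_{k_j}\bigr)^{1/L}\lesssim (L\,\cp(A_N))^{-1/2}\lesssim \bigl(\log R/(LR)\bigr)^{1/2}$, which beats the per-step pointwise bound and yields $\Prob\{\rad(A_M)>2R\}\le 2^{-R}$ for $M\eqv R^{2}(\log R)^{-1/2}$; Borel--Cantelli then finishes. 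Your dyadic bookkeeping at the end is fine and parallels the paper's, but it sits on a false foundation, so the proposal as written does not prove the theorem.
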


\begin{rmrk}
The proofs of these Theorems in \citep{Kesten} and \citep{LawlerInersections} do not immediately generalize to DBM-$\pr$ with $\pr\neq 1$, since in this case attachment probabilities have a more complicated form than they do for DLA.
\end{rmrk}

\begin{rmrk} Kesten's argument was revisited and generalized for a larger family of graphs in \citep{KestenBetter} and for ballistic aggregation \citep{Ballistic_aggregation}. 
\end{rmrk} 

The main tool in the proofs of these Theorems is a discrete analogue of Beurling's estimate. 
Namely, the harmonic measure of any given site of a connected set $B$ is bounded from above by $C\rad(B)^{-1/2}$ in $\Z^2$ (see \citep{KestenRW}), and by $C\left(\log \rad(B)\right)^{1/2}\rad(B)^{-1}$ in $\Z^3$ (see \citep[Chapter 2.6]{LawlerInersections}) with an absolute constant $C$. 

\medskip
We give an alternative proof of these results, which does not rely on Beurling's estimate. Instead, we exploit the dynamical structure of the process and observation by Halsey that a well-known formula for an increment of capacity fits nicely into the DLA set-up. It was unrigorously observed in \citep{TauGrowth} that if $\tau$ is a tau-spectrum of the harmonic measure, which can be informally defined as
\begin{equation*}
\displaystyle\sum_{x\in \bd A_n} \hm^{\alpha}(x, A_n) \eqv \rad(A_n)^{-\tau(\alpha)},
\end{equation*}
then Hausdorff dimension of DBM-$\pr$ clusters should be equal to $\tau(\pr+2)-\tau(\pr)$. 
In addition, we combine this with discrete Makarov's Theorem \citep{DiscrMak} to obtain analogous theorem for DBM-$\pr$:

\begin{thr} \label{ThrDiscr2D}
Let $0\leq \pr < 2$ and $\left\{A_n\right\}_{n\geq 1}$ be a DBM-$\pr$ on $\Z^2$. 
Then there exists an absolute constant $\alpha>0$, and $C=C(\pr)>0$ such that with probability $1$ there exists $N$ such that for all $n>N$
$$\rad(A_n) < C n^{\frac{2}{4-\pr}}(\log n)^{\alpha\frac{|\pr-1|}{4-\pr}}. $$ 
\end{thr}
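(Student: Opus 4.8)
The plan is to follow the roadmap sketched in the introduction: rather than estimating $\rad(A_n)$ directly, I would control the logarithmic capacity $\cp(A_n)$, which by the Koebe-type distortion estimates satisfies $\rad(A_n)\eqv\cp(A_n)$ for edge-connected sets containing the origin, so it suffices to bound $\cp(A_n)$. The engine is Halsey's observation, made rigorous on the lattice: when a single particle is attached at $y\in\bd A_n$, the capacity can only grow, and I would prove the one-sided bound
$$\log\cp\bigl(\overline{A_n\cup\{y\}}\bigr)-\log\cp(\overline{A}_n)\le C\,\hm(y,\overline{A}_n)^2,$$
with an absolute constant $C$. Morally this is the Hadamard variational formula for the Robin constant: the normal derivative of the Green's function at a boundary site is comparable to the harmonic-measure density there, a unit particle is a bump of unit physical width, and the Hadamard formula produces exactly the square of the harmonic measure. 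The lattice version requires replacing the smooth variation by discrete potential-theoretic estimates for the response of the equilibrium measure to the addition of one vertex, uniformly over all connected clusters; note that for the upper bound on growth only this one-sided inequality is needed.

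Given the increment bound, the drift is a one-line computation. Writing $S_q=\sum_{z\in\bd A_n}\hm(z,\overline{A}_n)^q$ and using the DBM-$\pr$ transition probabilities,
$$\E\bigl[\log\cp(A_{n+1})-\log\cp(A_n)\,\big|\,A_n\bigr]=\sum_{y\in\bd A_n}\frac{\hm(y,\overline{A}_n)^{\pr}}{S_{\pr}}\Bigl(\log\cp\bigl(\overline{A_n\cup\{y\}}\bigr)-\log\cp(\overline{A}_n)\Bigr)\le C\,\frac{S_{\pr+2}}{S_{\pr}}.$$
This is precisely where the heuristic of \citep{TauGrowth} enters: in the multifractal formalism the ratio $S_{\pr+2}/S_{\pr}$ equals $\cp(A_n)^{-(\tau(\pr+2)-\tau(\pr))}$, and $\tau(\pr+2)-\tau(\pr)$ is the conjectural dimension governing the growth. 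The analytic heart of the argument is then to bound this ratio unconditionally, and here I would invoke the discrete Makarov theorem \citep{DiscrMak} to obtain, for $0\le\pr<2$,
$$\frac{S_{\pr+2}}{S_{\pr}}\le C(\pr)\,\cp(A_n)^{-(2-\pr/2)}\bigl(\log\cp(A_n)\bigr)^{\alpha|\pr-1|},$$
i.e.\ the lower bound $\tau(\pr+2)-\tau(\pr)\ge 2-\pr/2$ up to a logarithmic loss. The exponent $|\pr-1|$ reflects that $\pr=1$ is the distinguished point where $S_1=\sum_z\hm(z,\overline{A}_n)=1$ and Makarov's estimate applies with no logarithmic loss (recovering Kesten's clean exponent), the loss degrading linearly as $\pr$ moves away from $1$.

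To integrate the resulting stochastic differential inequality, set $D=2-\pr/2=(4-\pr)/2$ and pass to $v_n=\cp(A_n)^{D}$. Since one step multiplies the capacity by at most $e^{C\hm(y,\overline{A}_n)^2}$ with $\hm\le1$, convexity of $x\mapsto e^{ax}-1$ gives $v_{n+1}-v_n\le (e^{DC}-1)\,\cp(A_n)^{D}\hm(y,\overline{A}_n)^2$, so that taking conditional expectations and applying the previous display yields the clean drift bound $\E[v_{n+1}-v_n\mid A_n]\le C'(\pr)\bigl(\log\cp(A_n)\bigr)^{\alpha|\pr-1|}\eqv C'(\pr)(\log v_n)^{\alpha|\pr-1|}$. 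Thus $v_n$ minus its increasing compensator is a supermartingale. I would then integrate scale by scale: with $T_k=\inf\{n:\cp(A_n)\ge 2^{k}\}$, on $\{n<T_k\}$ the drift of $v$ is at most $C'(k\log2)^{\alpha|\pr-1|}$, so optional stopping forces $T_k$ to be of order $2^{kD}/k^{\alpha|\pr-1|}$; a Borel--Cantelli argument over the geometric sequence of scales, using the bounded multiplicative increments to control the fluctuations, upgrades this to the almost-sure statement that for all large $n$ one has $\cp(A_n)\le C n^{1/D}(\log n)^{\alpha|\pr-1|/D}$. Unwinding $1/D=2/(4-\pr)$ and $\rad(A_n)\eqv\cp(A_n)$ gives the claimed $\rad(A_n)<C n^{2/(4-\pr)}(\log n)^{\alpha|\pr-1|/(4-\pr)}$, where the fixed numerical discrepancy in the logarithmic exponent is absorbed into the absolute constant $\alpha$.

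The main obstacle is the rigorous discrete capacity-increment estimate. Unlike the continuous Loewner/Hadamard picture, attaching one lattice vertex is not an infinitesimal smooth perturbation, so one must simultaneously control the change of the discrete equilibrium potential and establish the comparison between the discrete normal derivative of the Green's function and $\hm(y,\overline{A}_n)$ uniformly over all connected clusters, and crucially prove the clean one-sided bound by $C\hm(y,\overline{A}_n)^2$ with no dimensional loss. A secondary difficulty is that the drift estimate degrades as $\pr\to 2$, where $D\to 1$ and the method only barely survives, and that the Makarov input must be applied with the sharp logarithmic exponent uniformly in $\pr$; keeping these exponents tight is exactly what produces the $|\pr-1|$ structure and, at $\pr=1$, the logarithm-free bound matching Theorem~\ref{ThrKesten}.
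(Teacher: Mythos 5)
Your proposal is essentially an attempt to make the paper's Section 2.1 heuristic (Steps 1 and 3) rigorous as a drift/supermartingale argument for the capacity, but the paper explicitly does \emph{not} proceed this way, and the step you lean on hardest is a genuine gap. You claim that the discrete Makarov theorem yields the pointwise bound
\begin{equation*}
\frac{\sum_{x\in\bd A}\hm(x,\overline{A})^{\pr+2}}{\sum_{x\in\bd A}\hm(x,\overline{A})^{\pr}}\;\lesssim\;\rad(A)^{-(4-\pr)/2}\,(\log\rad(A))^{\alpha|\pr-1|}
\end{equation*}
for all connected sets. Makarov's theorem only controls the \emph{denominator} from below (that is exactly Lemma \ref{MakApplDiscr}, i.e.\ $\tau(\pr)\le\pr-1$ up to logs); it says nothing about the numerator. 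The numerator bound you need, $\sum\hm^{\pr+2}\lesssim\rad^{-(\pr+2)/2}$, is precisely the inequality $\tau(\pr+2)\ge(\pr+2)/2$ of \eqref{eqTauLowerBound}, which in the paper is only the \emph{heuristic output} of a circular self-consistency argument combining Kesten's dynamic estimate \eqref{eqDimIneqKesten} with the conjectural dimension formula \eqref{eqDimTauHalsey} --- it is never proved as a deterministic geometric fact, and no such proof is known. At $\pr=1$ your claim reads $\sum_x\hm(x,\overline{A})^3\lesssim\rad(A)^{-3/2}$ for every connected set, which is strictly stronger than what even Beurling's estimate gives ($\sum\hm^3\le(\max\hm)^2\lesssim\rad^{-1}$). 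The paper circumvents exactly this obstacle: instead of bounding the static statistical sum $\sum\hm^{\pr+2}$, it bounds the \emph{dynamic} product of harmonic measures at attachment times along the specific branch realizing a long arm (Corollary \ref{CrlBeurlIntDiscr2D}: the geometric mean of $m$ attachment measures within a fixed annulus is $\lesssim m^{-1/2}$, because the capacity telescopes and changes only by $O(1)$ across the annulus), and then runs a Kesten-style union bound over branches, with Makarov used only for the normalizing sums --- precisely because, as the paper says, this lets one combine the capacity-increment observation with Kesten's argument ``without introduction of the power $\sigma$,'' i.e.\ without ever needing a pointwise bound on $\sum\hm^{\pr+2}$.

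There is a second, quantitative gap in your integration step. Optional stopping plus Markov's inequality at the stopping times $T_k$ gives only $\Prob\{T_k\le n_k\}\le C\varepsilon$ for $n_k\eqv 2^{kD}k^{-\alpha|\pr-1|}$ --- constant-order probabilities that are not summable, so Borel--Cantelli does not apply; forcing summability costs extra powers of $k$, i.e.\ extra $(\log n)$ factors in the final bound. In particular your scheme cannot produce the log-free exponent at $\pr=1$ that the theorem (and Kesten's original result) asserts, whereas the paper's union bound yields $\Prob\{\rad(A_M)>2R\}\le 2^{-R}$ at each scale, making the almost-sure statement immediate with no logarithmic loss. (A minor point besides: your normalization $\rad(A_n)\eqv\cp(A_n)$ contradicts the paper's discrete definition, for which $|\cp(A)-\frac{2}{\pi}\log\rad(A)|\lesssim1$ by \eqref{CapRadDiscr2d}; read charitably, your ``$\log\cp$'' is the paper's $\cp$, and your one-sided increment bound is then the upper half of Lemma \ref{LmmCapGrowDiscr2d}, which is fine --- but this renaming does not affect the two gaps above.)
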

\begin{rmrk}
For $\pr=0$ this result is sharp up to a power of $\log n$, since by trivial lower bound $\frac{1}{2} n^{1/2}<\rad(A_n)$ on $\Z^2$.
\end{rmrk}

We also prove analogous theorem in 3 dimensions.
\begin{thr}\label{ThrDiscr3D}
Let $\left\{A_n\right\}_{n\geq 1}$ be a DBM-$\pr$ on $\Z^3$. 
Then 
\begin{enumerate}
\item For $\pr\geq 1$ $\exists C>0$ such that with probability 1 there exists $N$ such that for all $n>N$ we have $$\rad(A_n) < C n^\frac{\pr}{1+\pr}(\log n)^\frac{\pr}{2(\pr+1)}.$$
\item For $\pr < 1$ $\exists C>0$ such that with probability 1 there exists $N$ such that for all $n>N$ we have $$\rad(A_n) < C n^{1/2}(\log n)^{1/4}.$$
\end{enumerate}
\end{thr}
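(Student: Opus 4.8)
\section*{Proof proposal for Theorem \ref{ThrDiscr3D}}

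The plan is to follow the dynamical, capacity-based scheme indicated in the introduction, tracking the Newtonian capacity $C_n:=\cp(\overline A_n)$ as the cluster evolves, and to convert a bound on the expected one-step increment of a monotone functional into an almost sure bound on $\rad(A_n)$. Two inputs drive everything. First, a discrete form of Halsey's capacity-increment identity: when a site $y\in\bd A_n$ is attached, the capacity increases by an amount comparable to $\hm(y,\overline A_n)^2\,C_n^2$ (up to a bounded Green-function factor), so in particular $0\le C_{n+1}-C_n\le c_0$ for an absolute constant $c_0$. Second, the elementary identity $\hm(y,\overline A_n)=\mathrm{Es}_{\overline A_n}(y)/C_n$ together with $\mathrm{Es}\le 1$ gives $\max_y\hm(y,\overline A_n)\le 1/C_n$; this plays the role of the discrete Beurling estimate but follows purely from the definition of capacity. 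The logarithmic corrections in the statement will enter only through the relation $\rad(A_n)/\log\rad(A_n)\lesssim C_n\lesssim \rad(A_n)$ between capacity and radius of a connected subset of $\Z^3$, whose lower end is attained by filament-like clusters.

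Writing $S_q:=\sum_{y\in\bd A_n}\hm(y,\overline A_n)^q$ for the harmonic-measure moments (the quantities governed by the $\tau$-spectrum), the DBM-$\pr$ transition weights $\hm^\pr/S_\pr$ combine with the increment identity to give $\E[\,C_{n+1}-C_n\mid A_n\,]\eqv C_n^2\,S_{\pr+2}/S_\pr$, which is exactly the moment ratio predicted by the $\tau$-spectrum heuristic of the introduction. Substituting $C_n\eqv\rad(A_n)$ turns this into an effective differential inequality $d\rad/dn\lesssim \rad^2\,S_{\pr+2}/S_\pr$; integrating it yields $\beta(\pr)$, and matching the stated exponents requires the moment bounds $S_{\pr+2}/S_\pr\lesssim \rad^{-2-1/\pr}$ for $\pr\ge 1$ and $S_{\pr+2}/S_\pr\lesssim \rad^{-3}$ for $\pr<1$, up to logarithms.

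The crux is therefore the three-dimensional estimate on these moments, and the two regimes reflect which bound is binding. For $\pr<1$ the denominator is controlled for free, $S_\pr\ge S_1=1$, while the tip contribution to $S_{\pr+2}$ is bounded by the capacity of the outer shell; this forces the DLA-level rate $n^{1/2}(\log n)^{1/4}$, the same bound Lawler obtained at $\pr=1$, and is why case (2) saturates there. For $\pr\ge 1$ one instead needs a genuine lower bound on $S_\pr$, expressing that harmonic measure on a radius-$\rad$ cluster cannot concentrate too strongly. In the plane this anti-concentration is supplied by discrete Makarov's theorem; in $\Z^3$, where neither conformal maps nor Makarov's theorem are available, obtaining the requisite one-sided control of $S_\pr$ and $S_{\pr+2}$ is the main obstacle. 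It is also where the logarithmic powers are pinned down, through the filament relation $C_n\eqv\rad/\log\rad$ and the $(\log\rad)^{1/2}$ in the $\Z^3$ Beurling-type bound stated above.

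Finally I would upgrade the expected-increment estimate to the almost sure statement by a scale-by-scale argument. Since $\rad(A_{n+1})-\rad(A_n)\le 1$ and, while $\rad(A_n)\in[2^k,2^{k+1})$, the conditional probability of a radius increment is $\lesssim 2^{-k/\pr}$ (respectively $\lesssim 2^{-k}$), the number of radius increments accumulated over any window of $m$ steps is stochastically dominated by a sum of independent Bernoulli variables with that success probability. Taking $m\eqv 2^{k(1+1/\pr)}$, a Chernoff bound makes it exponentially unlikely in $k$ that enough increments occur to push the radius from $2^k$ to $2^{k+1}$ prematurely, and Borel--Cantelli then yields, almost surely and for all large $n$, the bounds $\rad(A_n)\le C\,n^{\pr/(1+\pr)}(\log n)^{\pr/(2(\pr+1))}$ when $\pr\ge 1$ and $\rad(A_n)\le C\,n^{1/2}(\log n)^{1/4}$ when $\pr<1$. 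The bounded-increment structure is what makes this passage routine once the moment estimates of the previous step are established.
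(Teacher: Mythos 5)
There is a genuine gap, and you in fact name it yourself without closing it. Your scheme runs through the expected-increment identity $\E[\cp^{-1}(\overline A_n)-\cp^{-1}(\overline A_{n+1})\mid A_n]\eqv S_{\pr+2}/S_\pr$ and then requires the moment-ratio bounds $S_{\pr+2}/S_\pr\lesssim \rad^{-2-1/\pr}$ (for $\pr\ge 1$) and $\lesssim\rad^{-3}$ (for $\pr<1$) to hold at every time. An upper bound on $S_{\pr+2}$ at a fixed time is essentially the unknown multifractal estimate --- this is exactly the step you flag as ``the main obstacle,'' and no proof is offered. The paper never bounds $S_{\pr+2}$ at any fixed time. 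Instead it exploits the telescoping of $\cp^{-1}$ along the realized trajectory: summing the increments of Lemma \ref{LmmCapGrowDiscr3d} gives $\sum_j \hm_{k_j}^2\lesssim \cp(A_n)^{-1}$ for \emph{any} subsequence of attachment times, hence a geometric-mean (``integral Beurling'') bound $\bigl(\prod_j\hm_{k_j}\bigr)^{1/m}\le C_1(m\,\cp(A_n))^{-1/2}$ along any branch (Corollary \ref{CrlBeurlIntDiscr3D}). This is combined only with \emph{lower} bounds on the normalizing sum $S_\pr$ (H\"older giving $S_\pr\gtrsim|A|^{1-\pr}$ for $\pr\ge1$; the pointwise Beurling estimate of Proposition \ref{BeurlingDiscr3D} for $\pr<1$), inside Kesten's union bound over all candidate branches crossing the annulus from $R$ to $2R$. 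No per-time control of the expected capacity increment is ever needed.

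Your final upgrade step is also unsound as stated. The conditional probability that the radius increases at a given step is $\sum_{y}\hm^\pr(y,\overline A_n)/S_\pr$ summed over all boundary sites near the outer radius, and since harmonic measure concentrates precisely at the outer boundary, this quantity is not small --- it is not dominated by $\max_y\hm^\pr/S_\pr$ times anything controlled, so the claimed stochastic domination by i.i.d.\ Bernoulli variables with success probability $\lesssim 2^{-k/\pr}$ fails. A radius increase of $2^k$ requires a \emph{connected} chain of $\gtrsim 2^k$ attachments, and extracting smallness from that requires enumerating the possible branch paths and paying the sequential per-site probabilities, which is where the paper's combinatorial factor $\binom{M}{L}\,100R\,7^L$ comes from. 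Finally, a quantitative slip: your substitute for Beurling, $\max_y\hm(y,\overline A)\le 1/\cp(\overline A)\lesssim (\log r)/r$, is weaker than Lawler's $\hm\lesssim(\log r)^{1/2}/r$, and feeding it into the $\pr<1$ case yields $S_\pr\gtrsim r^{1-\pr}(\log r)^{\pr-1}$ instead of $r^{1-\pr}(\log r)^{(\pr-1)/2}$, which would produce a worse logarithmic power than the $(\log n)^{1/4}$ claimed in case (2); the genuine three-dimensional Beurling estimate cannot be dispensed with there.
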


Our results exactly match the best known estimates for DLA on $\Z^2$ and $\Z^3$, proved in \citep{KestenRW} and \citep[Theorem 2.6]{LawlerInersections}, and are stronger than both Kesten's original result for $\Z^3$ \citep{Kesten_higher_dim} and the result for $\Z^3$ in \citep{KestenBetter}.
However, we do not expect this to be sharp, since these estimates do not match the existing numerical simulations \citep{GrebBel, Meakin28}.

\subsection{Organization of the paper}
\label{sect_org}

We explain our argument at an informal level in Section \ref{HeuristicArgument}, and then prove Theorem \ref{ThrDiscr2D} in Section \ref{Section2Dim}, and Theorem \ref{ThrDiscr3D} in Section \ref{Section3Dim}.

\vspace{15pt}

We will be using the following notation:
\begin{itemize}
\item We write $f \gtrsim g$ for functions $f, g$ if there exists an absolute constant $C>0$ depending on the equation such that $Cf \geq g$;
We write $f\eqv g$ if $f \gtrsim g$ and $f \lesssim g$;
\item $\Prob^x$ denotes the probability measure corresponding to the random walk $S$ started at $x$;
\item $T_A = \min\left\{j> 0: S(j) \in A\right\}$ and $\overline{T}_A = \min\left\{j\geq 0: S(j) \in A\right\}$ are the hitting times of a random walk $S$;
\item $\hm_A(y)= \hm(y, A)$ is the harmonic measure of $y$ in the complement of $A$ as viewed from $\infty$;
\item  $G(x, y, A)$ is the Green's function with poles at $x$ and $y$ in the complement of $A$.
This is the unique function satisfying the following four properties

\begin{enumerate}
\item For any $x, y, \in \Z^d$, $G(x, y, A) = G(y, x, A)$.
\item For any $x\in \Z^d$ and $y\in A$ we have $G(x, y, A) = 0$.
\item For $d=2$, for any $x\in \Z^2$, $G(x, y, A)$ is bounded as a function of $y\in \Z^2$. 
For $d=3$, for any $x\in \Z^3$, $\lim_{y\to \infty}G(x, y, A) = 0$.
\item We have
\begin{equation*}
G(x, y, A) - \frac{1}{4}\sum_{y'\sim y} G(x, y', A) 
= \left\{
  \begin{array}{lr}
  	1, & \text{if } x = y \notin A\\	
    0, & \text{otherwise.} 
  \end{array}
\right. 
\end{equation*}
\end{enumerate}

Note that $G(x, y, A)$ can be defined as the expected number of times that random walk started at $x$ visits $y$ before hitting $A$ for the first time \citep[Chapter 4.6]{LawlerRW},
\begin{equation*}
\displaystyle
G(x, y, A) = \sum_{n=1}^{\infty} \Prob^x\left\{S(n) =y;\, n<\overline{T}_A\right\}.
\end{equation*} 
Also denote $G(x, A) = G(x, \infty, A) =  \lim_{y\to \infty}G(x, y, A)$ in the 2 dimensional case, see e.g. \citep[Section~14, Theorem~3]{Spitzer_book}.
We also remark that there are alternative definitions of $G(x, A)$ in 2 dimensions (see \citep[Proposition~6.4.7]{LawlerRW}), but it is a classical result that these definitions are equivalent, see Appendix~\ref{sect_app}.

\item $\rad(A)$ and $\cp(A)$ are radius and electrostatic capacity (as defined in \citep[Chapter 6.5, 6.6]{LawlerRW}) of $A$ respectively. Recall that in the $2$ dimensional case, 
\begin{equation*}
\cp(A) = \sum_{x\in \bd A} \hm_A(x)a(x-z), \text{ for any } z\in A,
\end{equation*}
where
\begin{equation*}
a(x) = \sum_{n=0}^{\infty}\left[\Prob^0\left\{S(n)=0\right\}-\Prob^0\left\{S(n)=x\right\}\right],
\end{equation*}
and in the $3$ dimensional case
\begin{equation*}\
\cp(A) = \sum_{x\in \bd A} \Prob^x\left\{T_A = \infty \right\}.
\end{equation*}
Also recall (see e.g. \citep[Proposition 6.5.4]{LawlerRW}) that in 3~dimensions
\begin{equation*}
\displaystyle
\hm(x, A) = \frac{\Prob^x\left\{T_A = \infty \right\}}{\cp(A)}.
\end{equation*}

\end{itemize}

\section{Proofs}
\subsection{Heuristic argument}
\label{HeuristicArgument}
We start with an informal account of our argument, which consists of three steps.
\medskip

\textbf{Step 1.}
We use a well-known property of harmonic measure that was first invoked in this context in \citep{TauGrowth}. 
In $2$ dimensional case we observe that if the $(n+1)$-st particle is attached at a point with harmonic measure $\hm$, then
\begin{equation}\label{eqCapIncrementDeterministic}
\cp(A_{n+1}) - \cp(A_n) \eqv \hm^2.
\end{equation}
Hence, for DBM-$\pr$ we have 
\begin{align}
\E \left[ \cp(A_{n+1}) - \cp(A_n) \right] 
&\eqv \sum_{x\in \bd A_n}\hm^{2}(x, A_n)\Prob\left[A_{n+1}\backslash A_n = \{x\}\right] \\
&\eqv \frac{\sum_{x\in \bd A_n}\hm^{\pr+2}(x, A_n)}{\sum_{x\in \bd A_n} \hm^{\pr}(x, A_n)}.\label{HalseyExp2D}
\end{align}

Let $\tau$ be a function such that
\begin{equation}
\sum_{x\in \bd A_n} \hm^{\alpha}(x, A_n) \eqv R^{-\tau(\alpha)}, \label{TauDef}
\end{equation}
where $R$ is the radius of cluster $A_n$.
This function $\tau$ is called \textit{multifractal spectrum} or \textit{tau-spectrum} of the cluster. For the sake of simplicity we are assuming that $\tau$ does not depend much on $n$.

It is well known that $\cp(A_n) \eqv \log n$ (see, e.g.  \citep[Lemma 6.6.7]{LawlerRW}). Differentiating it with respect to $n$ and using \eqref{eqHausdDimDef} and \eqref{HalseyExp2D} with dropped expectation sign we obtain 
\begin{equation*}
R^{-D(\pr)} \eqv \frac{1}{n} \eqv \bd \log n \eqv \bd \cp(A_n) \eqv  R^{\tau(\pr)-\tau(\pr+2)},
\end{equation*}
where $\bd$ denotes derivative in $n$.
Therefore, 
\begin{equation}\label{eqDimTauHalsey}
D(\pr)=\tau(\pr+2)-\tau(\pr).
\end{equation}

\medskip
\textbf{Step 2.}
Let $\sigma$ be such that $\max_{x\in \bd A_n} \hm(x, A_n) \eqv R^{-\sigma}$. Kesten's argument \citep{Kesten} (without Beurling's estimate) implies 
\begin{equation}\label{eqDimIneqKesten}
D(\pr) \geq 1-\tau(\pr) + \pr \sigma.
\end{equation} 
We will briefly recall it for completeness. 
Observe that the longest branch grows with the average speed of at most 
$$\max_{x\in \bd A_n} \frac{\hm^{\pr}(x, A_n)}{\sum_{y\in \bd A_n} \hm^{\pr}(y, A_n)} 
\eqv R^{-\pr\sigma+\tau(\pr)}.$$
Thus, $\bd R \leq R^{-\pr\sigma+\tau(\pr)}$. 
Substituting \eqref{eqHausdDimDef}, we obtain
\begin{equation}
\bd n^{1/D(\pr)} \leq n^{(-\pr\sigma+\tau(\pr))/{D(\pr)}},
\end{equation}
which yields \eqref{eqDimIneqKesten}.

\medskip
\textbf{Step 3.}
After applying trivial inequality $\sigma\geq \tau(\pr+2)/(\pr+2)$ to right-hand side of \eqref{eqDimIneqKesten} and combining with \eqref{eqDimTauHalsey} we obtain
\begin{equation}\label{eqDimCombineKestenHalsey}
\tau(\pr+2)-\tau(\pr) = D(\pr) \geq 1-\tau(\pr) + \frac{\pr\tau(\pr+2)}{(\pr+2)}.
\end{equation}
Hence, 
\begin{equation}\label{eqTauLowerBound}
\tau(\pr+2)\geq(\pr+2)/2.
\end{equation}

Now we apply the discrete version of Makarov's Theorem \citep{DiscrMak}, which states that there are $\eqv n$ vertices with harmonic measure $\eqv 1/n$. This implies that
\begin{equation}\label{eqTauUpperBound}
\tau(\pr) \leq \pr -1.
\end{equation} 
Hence, combining \eqref{eqDimTauHalsey}, \eqref{eqTauLowerBound}, and \eqref{eqTauUpperBound}, we obtain
$D(\pr) \geq (4-\pr)/2.$

\medskip
Analogous heuristics for $3$ dimensional DLA can be found in \citep{Lawler_1994}. The only difference is that the left-hand side of \eqref{HalseyExp2D} is replaced by $$\E \left[ \cp^{-1}(A_{n+1}) - \cp^{-1}(A_n) \right]$$ and we do not have Makarov's Theorem.

\subsection{Proof for DBM in 2 dimensions}
\label{Section2Dim}

In order to make our heuristic argument from Section \ref{HeuristicArgument} rigorous, we translate it from the language of multifractal spectrum $\tau(\alpha)$ back to the language of statistical sums $\sum_{x\in \bd A} \hm^{\alpha}(x, A)$.

Moreover, instead of working with the expectation of the capacity growth  \eqref{HalseyExp2D} we look at the contribution of a given branch inside the cluster to the capacity increments over a long period of time. Informally, this allows us to combine Kesten's argument with observation \eqref{HalseyExp2D} without introduction of the power $\sigma$.

\medskip

First, we justify the capacity increment estimate \eqref{eqCapIncrementDeterministic}.

\begin{lmm} \label{LmmCapGrowDiscr2d}
Let $A\subset\Z^2$ be a compact set and $x\in \bd A.$ Set $B = \left\{x\right\}\cup A$. Then
$\cp (\overline{B}) - \cp (\overline{A}) \eqv \hm^2(x, \overline{A})$.
\end{lmm}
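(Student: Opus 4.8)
The plan is to treat the passage from $\overline A$ to $\overline B$ as a rank-one (more precisely, an $O(1)$-rank) perturbation of the two-dimensional equilibrium problem for the potential kernel, and to read off the increment of the Robin-constant-type capacity $\cp(\overline A)=\sum_{y\in\bd A}\hm_{\overline A}(y)\,a(y-z)$ from a Schur-complement identity. The first observation is geometric and discrete: since $x\in\bd A$ is adjacent to $A$, the two closures differ by only a bounded number of vertices, $\overline B\setminus\overline A=\bd\{x\}\setminus\overline A$, all of which are neighbours of $x$. Thus it suffices to understand what happens when one adjoins a single new vertex $x'$ (a neighbour of $x$ lying outside $\overline A$) to $\overline A$; the remaining $O(1)$ vertices are handled identically and contribute the same order, since they are mutually adjacent and all touch $\overline A$.

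Next I would establish the exact increment formula. Writing $\Phi_{\overline A}(w)=\sum_{y\in\bd A}\hm_{\overline A}(y)\,a(w-y)$ for the harmonic-measure-weighted potential-kernel potential of $\overline A$ (so that $\Phi_{\overline A}\equiv\cp(\overline A)$ on $A$), the block inversion of the kernel matrix $\big(a(u-v)\big)$ --- equivalently, a direct optional-stopping computation for the walk started at $x'$ and stopped on $\overline A$ --- yields
$$\cp(\overline B)-\cp(\overline A)\;\eqv\;\frac{\big(\Phi_{\overline A}(x')-\cp(\overline A)\big)^2}{G(x',x',\overline A)}.$$
Here the denominator is the Green's function of the complement of $\overline A$ at the new vertex, i.e. the expected number of visits to $x'$ before the walk hits $\overline A$ started from $x'$. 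Because $x'$ is adjacent to $x\in\overline A$, the walk steps onto $\overline A$ at the first move with probability at least $1/4$, so $G(x',x',\overline A)\eqv 1$ with an absolute constant, independently of the geometry of $A$.

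It then remains to identify the potential deficit with harmonic measure, namely to prove $\Phi_{\overline A}(x')-\cp(\overline A)\eqv\hm(x,\overline A)$. For this I would use that harmonic measure from infinity coincides with the normalised equilibrium measure, so that $\Phi_{\overline A}$ is exactly the equilibrium potential of $\overline A$, equal to $\cp(\overline A)$ on the set and slightly exceeding that value just outside it. Evaluating the one-step increment of $\Phi_{\overline A}$ across the tip $x$ --- again via optional stopping and the defining property of $a$ (harmonic off the origin, with unit Laplacian at $0$) --- expresses $\Phi_{\overline A}(x')-\cp(\overline A)$ as a hitting-type quantity comparable to the equilibrium charge sitting at $x$, which is precisely $\hm(x,\overline A)$. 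Substituting these comparisons into the increment identity gives $\cp(\overline B)-\cp(\overline A)\eqv\hm^2(x,\overline A)$, as claimed.

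The main obstacle is the two-sided ($\eqv$) control in the recurrent planar setting. Unlike in $\Z^3$, the Green's function with pole at infinity vanishes identically (as noted in the preliminaries), so the whole argument must be carried out with the potential kernel $a$, whose logarithmic growth and additive normalisation make the perturbation algebra delicate and reverse the signs relative to the transient case. Concretely, the two points requiring genuine work are the lower bound $\Phi_{\overline A}(x')-\cp(\overline A)\gtrsim\hm(x,\overline A)$ and the uniform bound $G(x',x',\overline A)\lesssim 1$: both must hold with absolute constants, depending on neither $A$ nor the position of $x$, which is exactly where the discreteness of the lattice (bounded degree and one-step escape from $x'$ into $\overline A$) is essential. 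Once these local estimates are in place, combining them with the increment identity is routine.
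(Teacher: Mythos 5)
Your overall skeleton is sound and, modulo notation, close to the paper's: the exact single-point increment identity $\cp(\overline{A}\cup\{x'\})-\cp(\overline{A})=G(x',\overline{A})^2/G(x',x',\overline{A})$ is correct (it is Lawler's formula $\cp(\overline{B})=\cp(\overline{A})+\sum_j \hm(x_j,\overline{B})G(x_j,\overline{A})$ specialised to one point, combined with $G(x',\overline{A})=\hm(x',\overline{A}\cup\{x'\})\,G(x',x',\overline{A})$), and $G(x',x',\overline{A})\eqv 1$ by one-step escape into $\overline{A}$ is immediate, as you say. The genuine gap is the per-neighbour two-sided comparison $\Phi_{\overline{A}}(x')-\cp(\overline{A})\eqv\hm(x,\overline{A})$, and with it your reduction claim that the $O(1)$ vertices of $\overline{B}\setminus\overline{A}$ are ``handled identically and contribute the same order.'' Since $\Phi_{\overline{A}}(x')-\cp(\overline{A})=G(x',\overline{A})$, your lower bound asserts that \emph{every} neighbour of $x$ outside $\overline{A}$ is reachable from infinity (avoiding $\overline{A}$) with probability $\gtrsim\hm(x,\overline{A})$, with an absolute constant. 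Screening kills this: let $x$ sit at the closed end of a width-one corridor of depth $n$ carved into $A$, with one admissible neighbour $x_1$ inside the corridor and another, $x_2$, exposed on the outside of the cluster. Then $G(x_1,\overline{A})$ is exponentially small in $n$ (the walk must traverse the corridor, whose walls lie in $\overline{A}$), while $\hm(x,\overline{A})\geq\frac{1}{4}G(x_2,\overline{A})$ is far larger. Only the \emph{sum} over the $k\leq 3$ new vertices is comparable to the harmonic measure: the exact first-passage decomposition gives $\hm(x,\overline{A})=\frac{1}{4}\sum_{j=1}^{k}G(x_j,\overline{A})$, because a walk first hitting $\overline{A}$ at $x$ must arrive by a step from some $x_j$. (You also implicitly assume $k\geq 1$; the case $\hm(x,\overline{A})=0$, where $\overline{B}=\overline{A}$ is possible, needs a separate sentence, as in the paper.)

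The repair is short and turns your argument into essentially the paper's proof. Since $k\leq 3$, the vertex $x_l$ maximising $G(x_j,\overline{A})$ satisfies $G(x_l,\overline{A})\geq\frac{1}{3}\sum_j G(x_j,\overline{A})\eqv\hm(x,\overline{A})$; adjoin $x_l$ \emph{first}, so that your increment identity together with $G(x_l,x_l,\overline{A})\eqv 1$ gives the lower bound $\gtrsim\hm^2(x,\overline{A})$, and monotonicity of capacity makes the remaining additions harmless. For the upper bound, monotonicity of the Green's function under enlargement of the set gives, at each of the boundedly many steps, a deficit at most $G(x_j,\overline{A})\leq 4\hm(x,\overline{A})$, so the total increment is $\lesssim\hm^2(x,\overline{A})$. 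The paper accomplishes both bounds in one stroke by applying the multi-point formula directly and comparing $\hm(x_l,\overline{B})$, $G(x_l,\overline{A})$ and $\hm(x,\overline{A})$ for the argmax index $l$; your route through single-point Schur complements is equivalent once the maximum (rather than each individual neighbour) carries the lower bound.
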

\begin{proof} If $\hm(x, \overline{A})=0$ then $x$ is not accessible by random walk in the complement of $\overline{A}$ starting from $\infty$, so $\cp(\overline{A})=\cp(\overline{B})$.

If $\hm(x, \overline{A})>0$ then $\overline{B} \neq \overline{A}$.
Let $\overline{B} \backslash \overline{A} = \{x_1, \ldots x_k\}$, where $1\leq k\leq 3$.
It is known (see, e.g. \citep[Lemma 6.6.6]{LawlerRW}) that 
\begin{equation}\label{eqLmmCapGrow2dCapIncr}
\cp (\overline{B}) = \cp (\overline{A})+\sum_{j=1}^k\hm(x_j, \overline{B})G(x_j, \overline{A}).
\end{equation}
Recall from Appendix \ref{sect_app} that our definition of the Green's function $G(\cdot, \cdot)$ agrees with the definition in \citep[Proposition 6.4.7]{LawlerRW}.

Let $l$ be the index with the maximal harmonic measure:
$$l = \argmax_j \hm(x_j, \overline{B}).$$ 
Then it is easy to see that
\begin{equation}\label{eqLmmCapGrow2dGreenCompare}
\displaystyle
\hm(x_l, \overline{B})\eqv G(x_l, \overline{A})
\gtrsim G(x_j, \overline{A})
\qquad \text{for all } 1\leq j\leq k.
\end{equation}
Indeed, it is obvious that 
$\hm(x_l, \overline{B})\leq G(x_l, \overline{A}),$
and for any $1\leq j\leq k$ by first-entry decomposition,
\begin{align*}
\displaystyle 
G(x_j, \overline{A})
&
\displaystyle 
= \sum_{i=1}^k\hm(x_i, \overline{B})G(x_i, x_j, \overline{A})
\\
&
\displaystyle
\leq
\hm(x_l, \overline{B}) \sum_{i=1}^k G(x_i, x_j, \{x\}),
\end{align*}
so that $G(x_j, \overline{A})\lesssim \hm(x_l, \overline{B})$ since $\sum_{i=1}^k G(x_i, x_j, \{x\})\lesssim 1$.

Therefore, combining \eqref{eqLmmCapGrow2dCapIncr} and \eqref{eqLmmCapGrow2dGreenCompare} we get
\begin{equation*}
\displaystyle
\cp (\overline{B}) - \cp (\overline{A}) 
\eqv \hm^2(x_l, \overline{B}).
\end{equation*}
Note that by \eqref{eqLmmCapGrow2dGreenCompare} we also have
\begin{equation*}
\displaystyle
 \hm(x_l, \overline{B}) \eqv \sum_{j=1}^k G(x_j, \overline{A}) 
\eqv \hm(x, \overline{A}) ,
\end{equation*}
which finishes the proof.
\end{proof}

It is well-known that for all connected $A\subset \Z^2$ we have (see, e.g. \citep[Lemma 6.6.7]{LawlerRW})
\begin{equation} \label{CapRadDiscr2d}
\left|\,\cp(A) - \frac{2}{\pi} \log\rad(A)\,\right|\lesssim 1.
\end{equation}
This allows us to estimate the harmonic measure of the tip of a given branch in terms of cluster radius.

\begin{crl} \label{CrlBeurlIntDiscr2D}
Let $A_k\subset \Z^2$ for $k\in \N$. 
Assume that $A_{k+1}\backslash A_k = \{x_k\}$ where $x_k \in \bd A_k$ and $\hm_k = \hm(x_k, \overline{A_k})$. 
Then for any $R>0$ and any set of indices $\left\{ {k_j}\right\}_{j=1}^m$ satisfying
$\forall j\leq m: R<|\rad(A_{k_j})|<100R$, we have
$$\left(\prod_{j=1}^{m}\hm_{k_j}\right)^{1/m} \leq C_1 m^{-1/2},$$
for some constant $C_1>0$.
\end{crl}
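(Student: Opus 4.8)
The plan is to convert the product $\prod_j \hm_{k_j}$ into a telescoping sum of capacity increments, show that this sum is bounded by an absolute constant via the radius constraint, and then finish with the AM--GM inequality. First I would reorder the indices so that $k_1 < k_2 < \cdots < k_m$, which is harmless since $\prod_j \hm_{k_j}$ does not depend on the order of the factors. Applying Lemma~\ref{LmmCapGrowDiscr2d} at each step $k_j$ (with $A = A_{k_j}$ and $x = x_{k_j}$, so that $B = A_{k_j+1}$) gives
\begin{equation*}
\cp(\overline{A_{k_j+1}}) - \cp(\overline{A_{k_j}}) \eqv \hm_{k_j}^2 .
\end{equation*}

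Next I would sum these increments over $j$. Since the steps $k_1 < \cdots < k_m$ are distinct and each single-step increment $\cp(\overline{A_{k+1}}) - \cp(\overline{A_k})$ is nonnegative (capacity is monotone under adjoining a site, as Lemma~\ref{LmmCapGrowDiscr2d} exhibits), the selected increments form a sub-collection of the full telescoping sum between the first and last index, and therefore
\begin{equation*}
\sum_{j=1}^m \hm_{k_j}^2 \;\lesssim\; \sum_{j=1}^m \bigl[\cp(\overline{A_{k_j+1}}) - \cp(\overline{A_{k_j}})\bigr] \;\leq\; \cp(\overline{A_{k_m+1}}) - \cp(\overline{A_{k_1}}).
\end{equation*}
Now I would invoke \eqref{CapRadDiscr2d} to turn the right-hand side into a difference of logarithms of radii. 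The radius constraint gives $\rad(\overline{A_{k_1}}) \geq \rad(A_{k_1}) > R$, whereas $\rad(\overline{A_{k_m+1}}) \leq \rad(A_{k_m}) + 2 < 100R + 2$, since adjoining one particle and then passing to the closure each increase the radius by at most $1$. Hence the capacity difference is at most $\tfrac{2}{\pi}\log\tfrac{100R+2}{R} + O(1) = O(1)$, so that $\sum_{j=1}^m \hm_{k_j}^2 \leq C$ for an absolute constant $C$.

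Finally, AM--GM applied to the nonnegative numbers $\hm_{k_j}^2$ yields
\begin{equation*}
\left(\prod_{j=1}^m \hm_{k_j}\right)^{2/m} = \left(\prod_{j=1}^m \hm_{k_j}^2\right)^{1/m} \leq \frac{1}{m}\sum_{j=1}^m \hm_{k_j}^2 \leq \frac{C}{m},
\end{equation*}
and taking square roots gives the claim with $C_1 = \sqrt{C}$. I expect the only delicate point to be the bookkeeping in the middle step: one must verify that the chosen increments really are a sub-collection of a single telescoping sum (which requires the indices to be ordered and the increments to be nonnegative) and carefully track the passage from $A_{k_m}$ to $\overline{A_{k_m+1}}$ so that the upper radius stays comparable to $R$ and the logarithm remains bounded. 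Everything else is a direct combination of Lemma~\ref{LmmCapGrowDiscr2d}, the radius--capacity relation \eqref{CapRadDiscr2d}, and AM--GM.
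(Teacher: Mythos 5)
Your proof is correct and takes essentially the same route as the paper's: apply Lemma~\ref{LmmCapGrowDiscr2d} to each selected step, bound the resulting sum of increments by the telescoped capacity difference $\cp(\overline{A_{k_m+1}})-\cp(\overline{A_{k_1}})\lesssim \log(100R)-\log R\lesssim 1$ via \eqref{CapRadDiscr2d}, and finish with AM--GM. The only difference is that you make explicit the bookkeeping (ordering the indices, nonnegativity of increments via monotonicity of capacity, and the $+2$ radius slack from the closure) that the paper leaves implicit.
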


\begin{rmrk}
This is an integral analogue of the discrete Beurling's estimate \citep{KestenRW}, which states that harmonic measure at every point is less than $R^{-1/2}$, where $R$ is the cluster radius. Although Beurling's estimate is stronger, our proof is shorter and it exploits the dynamical nature of DBM and DLA processes. It would be interesting to adopt our argument to give an alternative proof of the original Beurling's estimate.
\end{rmrk}
\begin{proof}
From Lemma \ref{LmmCapGrowDiscr2d} and estimate \eqref{CapRadDiscr2d} we see that
\begin{multline*}
\sum_{j=1}^m \hm_{k_j}^2 \eqv \sum_{j=1}^m \left(\cp(\overline{A}_{k_j+1}) - \cp(\overline{A}_{k_j})\right) \leq \cp(\overline{}A_{k_m+1}) -  \cp(\overline{A}_{k_1}) \\
\lesssim \log (100R)-\log (R) \lesssim 1.
\end{multline*}
The statement follows from the inequality between geometric and arithmetic means

\begin{equation}\label{ArGeomIneq}
\left(\prod_{j=1}^{m}\hm_{k_j}\right)^{2/m} \leq \frac{\sum_{j=1}^m \hm_{k_j}^2}{m}\lesssim \frac{1}{m}.
\end{equation}
\end{proof}

Now we apply Makarov's Theorem in order to justify \eqref{eqTauUpperBound}.
\begin{lmm} \label{MakApplDiscr} 
There exists $\alpha>0$ such that for any $\pr\geq 0$, there exists $ C_2=C_2(\pr)>0$ such that for all connected sets $A\subset \Z^2$ with $\rad (A)$ big enough we have
$$\sum_{x\in \bd A} \hm(x, \overline{A})^{\pr}>C_2 \frac{ \rad^{1-\pr} (A)}{(\log\rad (A))^{\alpha|{1-\pr}|}}
.$$
\end{lmm}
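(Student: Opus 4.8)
The plan is to derive the bound entirely from the discrete Makarov theorem \citep{DiscrMak}; everything else is an elementary rearrangement. The case $\pr=1$ is immediate, since $\hm(\cdot,\overline A)$ is a probability measure and $\sum_{x\in\bd A}\hm(x,\overline A)=1=\rad(A)^{0}$, so any $C_2\le 1$ works and no $\log$ factor is needed because $|1-\pr|=0$. For $\pr\ne 1$ the point is that planar harmonic measure has dimension $1$: up to polylogarithmic corrections it is spread, at a single scale $\asymp 1/R$ with $R=\rad(A)$, over $\asymp R$ boundary points. Concretely, I would invoke \citep{DiscrMak} in the quantitative form that there is an absolute $\alpha>0$ and a set $E\subset\bd A$ carrying a definite fraction of the mass, $\hm(E,\overline A)\ge c$, on which $\hm(x,\overline A)\eqv \ell$ for a common level $\ell$ with $R^{-1}(\log R)^{-\alpha}\le \ell\le R^{-1}(\log R)^{\alpha}$.

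Granting this, the estimate follows uniformly in $\pr$. Since $\hm(x,\overline A)\lesssim\ell$ on $E$ we have $|E|\,\ell\gtrsim\hm(E,\overline A)\ge c$, hence $|E|\gtrsim \ell^{-1}$; and since $\hm(x,\overline A)\gtrsim\ell$ on $E$,
$$\sum_{x\in\bd A}\hm(x,\overline A)^{\pr}\ \ge\ \sum_{x\in E}\hm(x,\overline A)^{\pr}\ \gtrsim\ |E|\,\ell^{\pr}\ \gtrsim\ \ell^{\pr-1}.$$
Writing $\ell=R^{-1}(\log R)^{\theta}$ with $|\theta|\le\alpha$ gives $\ell^{\pr-1}=R^{1-\pr}(\log R)^{\theta(\pr-1)}$, and $\theta(\pr-1)\ge-|\theta|\,|\pr-1|\ge-\alpha|\pr-1|$ regardless of the sign of $\pr-1$. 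This is exactly $\rad(A)^{1-\pr}(\log\rad(A))^{-\alpha|1-\pr|}$ up to a multiplicative constant depending only on $\pr$ (through $c^{\pr}$ and the constants implicit in $\hm\eqv\ell$), which is the claimed $C_2(\pr)$; crucially $\alpha$ stays absolute.

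If \citep{DiscrMak} is only available in a one-sided form, I would split into $\pr\ge1$ and $\pr\le1$. For $\pr\ge1$ one only needs that a half-mass set $E$ has size $|E|\lesssim R(\log R)^{\alpha}$ (which follows from the lower threshold alone, as $|E|\le 1/\min_E\hm$), and convexity of $t\mapsto t^{\pr}$ via the power-mean inequality gives $\sum_{x\in E}\hm^{\pr}\ge|E|^{1-\pr}\hm(E)^{\pr}\gtrsim R^{1-\pr}(\log R)^{-\alpha(\pr-1)}$. For $\pr\le1$ one instead needs the spreading bound $|E|\gtrsim R(\log R)^{-\alpha}$ together with the pointwise bound $\hm\gtrsim R^{-1}(\log R)^{-\alpha}$ on $E$, whence $\sum_{x\in E}\hm^{\pr}\ge|E|(\min_E\hm)^{\pr}\gtrsim R^{1-\pr}(\log R)^{-\alpha(1-\pr)}$.

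I expect the main obstacle to lie entirely in the first paragraph: extracting from the cited discrete Makarov theorem the precise single-scale (or two-sided) statement with a single absolute exponent $\alpha$. The delicate regime is $\pr$ near $1$, where the naive estimate that bounds $\hm^{\pr}$ by the bottom of a polylogarithmic level window and $|E|$ by its top loses a factor $(\log R)^{-\alpha(1+\pr)}$ rather than $(\log R)^{-\alpha(1-\pr)}$; squeezing the exponent down to $|1-\pr|$ is precisely what requires the measure to be controlled at essentially one scale instead of across a polylog range of scales. The remaining ingredients—the power-mean and counting inequalities and the bookkeeping of the log exponents—are routine.
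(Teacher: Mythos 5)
There is a genuine gap, and it sits exactly where you flagged it: the ``single-scale'' form of discrete Makarov that your first paragraph invokes is not what \citep{DiscrMak} provides, and it does not follow from it. The theorem cited in the paper is the entropy estimate
\begin{equation*}
\Bigl|\sum_{x\in \bd A} \hm(x,\overline{A})\log \hm(x,\overline{A}) + \log\rad(A)\Bigr|\lesssim \log\log\rad(A),
\end{equation*}
i.e.\ an \emph{average} statement about $-\log\hm$, and an average of order $\log R$ is perfectly compatible with the mass living at two widely separated levels with nothing near scale $1/R$: e.g.\ half the mass on sites with $\hm\eqv R^{-1/2}$ and half on sites with $\hm\eqv R^{-3/2}$ gives entropy $\tfrac12\cdot\tfrac12\log R+\tfrac12\cdot\tfrac32\log R=\log R$, yet every level set within a polylog window of $1/R$ carries zero mass. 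So the set $E$ with $\hm(E,\overline A)\ge c$ and $\hm\eqv\ell\eqv R^{-1}$ up to $(\log R)^{\pm\alpha}$ cannot be extracted from the citation, and your fallback via one-sided thresholds fares no better: running Markov's inequality on $-\log\hm$ only guarantees a half-mass set with $\hm\gtrsim R^{-2}(\log R)^{-C}$, which loses a constant factor in the \emph{exponent} of $R$, not merely polylogs, and so yields $R^{2(1-\pr)}$-type bounds instead of $R^{1-\pr}$.

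The missing idea is that no level-set extraction is needed at all: apply Jensen's inequality directly to the convex function $\exp((\pr-1)\,\cdot\,)$ with weights $\hm(x,\overline A)$ (this is essentially the power-mean step you already use, but applied globally rather than on a hypothetical $E$). Since $\sum_{x\in\bd A}\hm(x,\overline A)=1$,
\begin{equation*}
\sum_{x\in \bd A} \hm(x,\overline A)^{\pr}
=\sum_{x\in\bd A}\hm(x,\overline A)\,e^{(\pr-1)\log\hm(x,\overline A)}
\ \ge\ \exp\Bigl((\pr-1)\sum_{x\in\bd A}\hm(x,\overline A)\log\hm(x,\overline A)\Bigr),
\end{equation*}
and the entropy estimate substituted into the exponent gives $\exp\bigl((1-\pr)\log R-C|\pr-1|\log\log R\bigr)\gtrsim R^{1-\pr}(\log R)^{-\alpha|1-\pr|}$ in one line, uniformly in $\pr$, with $\alpha$ absolute and the exponent of $R$ exact --- precisely because convexity uses the full average instead of a Markov-type set, which is also why the delicate $\pr\approx 1$ regime you worried about causes no loss here (the error is manifestly proportional to $|\pr-1|$). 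This is the paper's proof; your back-end computations (the power-mean and counting steps) are correct conditionally on the concentration claim, but that claim is an unproven strengthening of the available input, so the proposal as written does not close.
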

\begin{proof}
We use Theorem 1.5 from \citep{DiscrMak} which states that for any connected $B\subset \Z^2$ we have
\begin{equation} \label{MakDiscr}
\left|\sum_{x\in B} \hm_B(x)\log \hm_B(x) + \log\rad(B)\right|\lesssim \log \log \rad(B).
\end{equation} 
Since $\sum_{x\in \bd A} \hm(x, \overline{A}) = 1$, by Jensen's inequality for the convex function $\exp(\cdot)$ and weights $\hm(x, \overline{A})$ we have
\begin{multline*}
\sum_{x\in \bd A} \hm(x, \overline{A})^{\pr} = \sum_{x\in \bd A}\hm(x, \overline{A}) \exp\left((\pr-1)\log \hm(x, \overline{A})\right)  \\
\geq \exp\left((\pr-1)\sum_{x\in \bd A} \hm(x, \overline{A})\log \hm(x, \overline{A})\right).
\end{multline*}
Combining this with \eqref{MakDiscr} we get that for some $C, C_2>0$,
\begin{multline*}
\exp\left((\pr-1)\sum_{x\in \bd A} \hm(x, \overline{A})\log \hm(x, \overline{A})\right)
\geq \\
\exp\left((1-\pr)\log \rad(\overline{A}) - C|\pr-1|\log \log \rad(\overline{A}) \right)\geq \\
C_2 \frac{ \rad^{1-\pr} (A)}{(\log\rad (A))^{\alpha|{1-\pr}|}}
,
\end{multline*}
where $\alpha=C$ and $C_2$ accounts for changing $\rad(\overline{A})$ in the left-hand side to $\rad(A)$ in the right-hand side. This finishes the proof.
\end{proof}
\begin{rmrk}
It is believed that, in fact
$$\left|\sum_{x\in A} \hm_A(x)\log \hm_A(x) + \log\rad(A)\right|\lesssim 1,$$
but this has not been proved yet. Unfortunately, the method used in \citep{DiscrMak} is not sufficient to obtain such sharp estimates. 
\end{rmrk}

Now we combine Corollary \ref{CrlBeurlIntDiscr2D} and Lemma \ref{MakApplDiscr} to prove Theorem \ref{ThrDiscr2D}.
\begin{proof}[Proof of Theorem \ref{ThrDiscr2D}]

Let $R=\left\lfloor \rad(A_N)\right\rfloor$. 
We will assume that $N$ is sufficiently large, so that $R>10$.
Set 
\begin{equation} \label{localMdefin}
M := \left\lfloor\widetilde{C}\frac{R^{(4-\pr)/2}}{(\log R)^{\alpha |1-\pr|}}\right\rfloor
\end{equation}
for $\alpha$ from Lemma \ref{MakApplDiscr} and a small constant $\widetilde{C}>0$ given by 
\begin{equation}
\widetilde{C}=(10^4 C_0)^{-1},
\end{equation}
where $C_0=C_1^{\pr} C_2^{-1}\, 2^{\alpha|1-\pr|}$ for the constants $C_1$, $C_2$ and $\alpha$ appearing in Corollary~\ref{CrlBeurlIntDiscr2D} and Lemma~\ref{MakApplDiscr}.

We want to estimate $\Prob\left\{\rad(A_{M})>2R\right\}$. Take the first DBM branch $x_1, \ldots x_L$ (a collection of sites such that $x_{k+1}$ is adjacent to $x_k$ for all $k$) that starts at radius $R$ and reaches radius $2R$. Note that $L\geq R$. Suppose that these points were attached at times $k_1\ldots, k_L$.
We estimate below the number of possible paths taken by branches, the number of such branches, and probabilities that the corresponding paths are filled between times $N$ and $M$.
\begin{multline*}
\Prob\left\{\rad(A_{M})>2R\right\} \leq \\
\leq \sum_{L=R}^{M-N}
\begin{pmatrix}
\text{number of ways to choose} \\ 
L\text{ points out of }M-N
\end{pmatrix}
\begin{pmatrix}
\text{number of such} \\ 
\text{paths of length }L
\end{pmatrix}
\\
\times\Prob\begin{Bmatrix}
\text{a given path of length }L\text{ is filled} \\ 
\text{at the given moments}
\end{Bmatrix}.
\end{multline*}
Thus, using straightforward estimate of the number of such paths we write
\begin{equation*}
\Prob\left\{\rad(A_{M})>2R\right\}
\leq  \sum_{L=R}^{M} \binom{M}{L} \, 100 \, R \: 3^{L}\prod_{j=1}^{L}\frac{\hm^{\pr}(x_j, \overline{A_{k_j}})}{\sum_{y \in \bd A_{k_j}}\hm^{\pr}(y, \overline{A_{k_j}})}.
\end{equation*}
Here the factor $100 R$ accounts for the starting point of the branch, which is chosen on the circle of radius $R$.
By Corollary \ref{CrlBeurlIntDiscr2D},
\begin{equation*}
\prod_{j=1}^{L} \hm^{\pr}(x_j, \overline{A_{k_j}}) \leq \left(C_1\, L^{-1/2}\right)^{\pr L},
\end{equation*}
and by Lemma \ref{MakApplDiscr}, 
\begin{align*}
\prod_{j=1}^{L} \frac{1}{\sum_{y \in \bd A_{k_j}}\hm^{\pr}(y, \overline{A_{k_j}})} 
&\leq
\displaystyle\left(C_2^{-1}\frac{\big(\log (2R)\big)^{\alpha |1-\pr|}}{R^{1-\pr}}\right)^L\\
&\leq
\displaystyle\left(C_2^{-1}2^{\alpha|1-\pr|}\frac{(\log R)^{\alpha |1-\pr|}}{R^{1-\pr}}\right)^L,
\end{align*}
where we used that $R>10$.
Thus,
\begin{equation*}
\Prob\left\{\rad(A_{M})>2R\right\}
\leq
\sum_{L=R}^{M} \binom{M}{L} \, 100 \, R \: 3^{L} \left(C_0\frac{L^{-\pr/2}(\log R)^{\alpha |1-\pr|}}{R^{1-\pr}}\right)^L.
\end{equation*}
It is easy to see that the expression under summation sign reaches its maximum at $L = R$.
Indeed, the ratio of these expressions for $L$ and $L-1$ is
\begin{equation*}
\frac{M-L+1}{L}\cdot 3C_0 \cdot \left(\frac{(L-1)^{\pr/2}}{L^{\pr/2}}\right)^{L-1}L^{-\pr/2}
\cdot\frac{(\log R)^{\alpha |1-\pr|}}{R^{1-\pr}}
\end{equation*}
which is less than $1$ for $M$ given by \eqref{localMdefin} and the chosen $\widetilde{C}$:
\begin{multline*}
\frac{M-L+1}{L}\cdot 3C_0 \left(\frac{(L-1)^{\pr/2}}{L^{\pr/2}}\right)^{L-1}L^{-\pr/2}
\cdot\frac{(\log R)^{\alpha |1-\pr|}}{R^{1-\pr}}<\\
3C_0 M L^{-1-\pr/2} \frac{(\log R)^{\alpha |1-\pr|}}{R^{1-\pr}}<
3C_0 M \frac{(\log R)^{\alpha |1-\pr|}}{R^{(4-\pr)/2}}
< 3C_0\widetilde{C}.
\end{multline*}
Thus, 
\begin{equation*}
\Prob\left\{\rad(A_{M})>2R\right\}
\leq M \binom{M}{R} \, 100 \, R \: 3^{R}\left(C_0\frac{R^{-\pr/2}(\log R)^{\alpha |1-\pr|}}{R^{1-\pr}}\right)^R.
\end{equation*}
For the chosen $\widetilde{C}$, right-hand side is smaller than $2^{-R}$ for $R$ big enough. Indeed, using inequality $R! >R^R/e^{R}$, we obtain
\begin{equation*}
M \binom{M}{R} < \frac{e^{R} M^{R+1}}{R^R} < \frac{\left(e\,\widetilde{C}\right)^{R+1}}{R^R} \left(\frac{R^{(4-\pr)/2}}{(\log R)^{\alpha |1-\pr|}}\right)^{R+1},
\end{equation*}
and thus
\begin{equation*}
M \binom{M}{R} \, 100 \, R \: 3^{R}\left(C_0\frac{R^{-\pr/2}(\log R)^{\alpha |1-\pr|}}{R^{1-\pr}}\right)^R
\leq 
(100 e \,\widetilde{C} C_0)^{R+1} R^{3-\pr/2},
\end{equation*}
with right-hand side smaller than $2^{-R}$ for the chosen $\widetilde{C}$ and $R$ big enough.

Hence, for $R$ big enough 
\begin{equation*}
\Prob\left\{\rad(A_{M})>2R\right\}< 2^{-R}.
\end{equation*}
By Borel-Cantelli lemma almost surely $\rad(A_{M})<2R$ for $R$ big enough, which, together with \eqref{localMdefin}, finishes the proof. 
\end{proof}

\subsection{Proof for DBM in 3 dimensions}
\label{Section3Dim}
Our proof for $3$ dimensional case is similar to the $2$ dimensional argument.
\medskip

It is known that for all connected $A\subset \Z^3$ we have 
\begin{equation} \label{CapRad3DDiscr}
\frac{\rad(A)}{\log\rad(A)} \lesssim \cp(A) \lesssim \rad(A).
\end{equation} 
The upper bound follows from capacity monotonicity and capacity of the ball estimate (see, e.g. \citep[Proposition 6.5.2]{LawlerRW}), while the lower bound was proved in \citep[Proposition 3.1]{KestenBetter}.
\medskip

We start with a 3D version of the capacity increment estimate:
\begin{lmm} \label{LmmCapGrowDiscr3d}
Let $A\subset\Z^3$ be a compact set and $x\in \bd A.$ Let $B = \left\{x\right\}\cup A$. Then %
$ \cp^{-1} (\overline{A})-\cp^{-1} (\overline{B})  \eqv \hm^2(x, \overline{A})$.
\end{lmm}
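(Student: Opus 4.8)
The plan is to follow the two-dimensional scheme of Lemma~\ref{LmmCapGrowDiscr2d}, but phrased for $\cp^{-1}$ throughout. As in two dimensions, $\overline B=\overline A\cup\{x_1,\dots,x_k\}$, where $x_1,\dots,x_k$ are the neighbours of $x$ not lying in $\overline A$; since $x\in\bd A$ at least one neighbour of $x$ belongs to $A$, so $1\le k\le 5$. Writing
$$\cp^{-1}(\overline A)-\cp^{-1}(\overline B)=\frac{\cp(\overline B)-\cp(\overline A)}{\cp(\overline A)\,\cp(\overline B)},$$
the statement follows once I show (i) $\cp(\overline B)\eqv\cp(\overline A)$ and (ii) $\cp(\overline B)-\cp(\overline A)\eqv\hm^2(x,\overline A)\,\cp^2(\overline A)$. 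Part (i) is a consequence of (ii): the increment is bounded by an absolute constant (see below), while $\cp(\overline A)$ is bounded below by an absolute constant, and monotonicity gives $\cp(\overline B)\ge\cp(\overline A)$.

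For the increment I would use the exact identity obtained by adjoining the points $F=\{x_1,\dots,x_k\}$ to $\overline A$. Using the representation $\cp(S)=\mathbf 1^{T}G_S^{-1}\mathbf 1$, with $G_S=[G(z,z')]_{z,z'\in S}$ the $\Z^3$ Green's function restricted to $S$, and taking the Schur complement of the $\overline A$-block, one gets
$$\cp(\overline B)-\cp(\overline A)=v^{T}\,\Sigma^{-1}\,v,\qquad v_j=\Prob^{x_j}\{T_{\overline A}=\infty\},$$
where $\Sigma=[G(x_i,x_j,\overline A)]_{i,j}$ is the Green's function in the complement of $\overline A$ (the Schur complement reduces to this by the first-entrance decomposition), and $v_j=1-\Prob^{x_j}\{T_{\overline A}<\infty\}$ is the escape probability from $x_j$.

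The heart of the matter is the spectral comparison $\Sigma\eqv I$ with absolute constants. Each $x_j$ is adjacent to $x\in\overline A$, so from $x_j$ the walk enters $\overline A$ in one step with probability at least $1/6$; hence the expected number of returns to $x_j$ before hitting $\overline A$ is at most $6$, giving $1\le G(x_j,x_j,\overline A)\le 6$, and since $G(x_i,x_j,\overline A)\le G(x_j,x_j,\overline A)$ all entries of $\Sigma$ lie in $[0,6]$. For the lower bound on the spectrum I would note that the full complement Green's function equals $(I-P)^{-1}$ for the symmetric substochastic operator $P$ of the walk killed on $\overline A$, whose spectrum lies in $(-1,1)$, so it dominates $\tfrac12 I$; restricting to the principal submatrix indexed by $F$ preserves $\Sigma\succeq\tfrac12 I$. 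Therefore
$$\cp(\overline B)-\cp(\overline A)\eqv |v|^2=\sum_{j}\Prob^{x_j}\{T_{\overline A}=\infty\}^{2}\eqv\Big(\sum_{j}\Prob^{x_j}\{T_{\overline A}=\infty\}\Big)^{2},$$
where the last equivalence uses $k\le5$; in particular the increment is bounded by an absolute constant, which gives (i).

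It remains to express the escape probabilities through $\hm(x,\overline A)$. A one-step decomposition at $x$ gives $\Prob^{x}\{T_{\overline A}=\infty\}=\tfrac16\sum_{j}\Prob^{x_j}\{T_{\overline A}=\infty\}$, since a non-returning walk must first step to one of the $x_j$. As harmonic measure from infinity is the normalised equilibrium measure, $\Prob^{x}\{T_{\overline A}=\infty\}=\hm(x,\overline A)\,\cp(\overline A)$. Combining the last two displays yields $\cp(\overline B)-\cp(\overline A)\eqv\hm^2(x,\overline A)\,\cp^2(\overline A)$, which is (ii); dividing by $\cp(\overline A)\cp(\overline B)\eqv\cp^2(\overline A)$ completes the proof. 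This also covers the degenerate case $\hm(x,\overline A)=0$ automatically: then $v=0$ and both sides vanish. I expect the spectral comparison $\Sigma\eqv I$ to be the only genuinely delicate point; the rest is bookkeeping.
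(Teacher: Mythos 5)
Your overall route is sound and genuinely different from the paper's. The paper obtains the increment formula by evaluating $\Prob^y\{T_{\overline B}<\infty\}-\Prob^y\{T_{\overline A}<\infty\}$ for $y\to\infty$ and invoking Lawler's asymptotics for hitting probabilities and hitting distributions (Propositions 6.5.1, 6.5.2, 6.5.4), which yields $\cp(\overline B)-\cp(\overline A)\eqv\sum_j \Prob^{x_j}\{T_{\overline B}=\infty\}\,\Prob^{x_j}\{T_{\overline A}=\infty\}$, and then compares the two escape probabilities at the argmax index $l$. Your Schur-complement identity $\cp(\overline B)-\cp(\overline A)=v^{T}\Sigma^{-1}v$ is an exact, finite-dimensional version of the same bilinear form: the last-exit decomposition for the walk killed on $\overline A$ shows $(\Sigma^{-1}v)_j=\Prob^{x_j}\{T_{\overline B}=\infty\}$, so your approach trades the far-field limit for linear algebra. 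Your exact one-step identity $\sum_j v_j=6\,\hm(x,\overline A)\cp(\overline A)$ is in fact cleaner than the paper's argmax comparison in \eqref{localDiscr3dEscAsympt} (which itself requires a small argument), your bound $k\le 5$ slightly sharpens the paper's $1\le k\le 6$, and you correctly dispose of the degenerate case $\hm(x,\overline A)=0$, which the paper treats only implicitly.

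The one genuine flaw is exactly where you suspected it: the justification of $\Sigma\succeq\tfrac12 I$. The spectrum of the killed transition operator $P$ on $\ell^2(\Z^3\setminus\overline A)$ is \emph{not} contained in the open interval $(-1,1)$: killing on a finite set is a finite-rank perturbation, so the essential spectrum is still all of $[-1,1]$, and $(I-P)^{-1}$ is an unbounded operator on $\ell^2$, so ``spectrum in $(-1,1)$ hence domination by $\tfrac12 I$'' does not stand as written. The conclusion is nevertheless true: $-1$ is not an eigenvalue (conjugating by the parity function $(-1)^{y_1+y_2+y_3}$ turns a putative eigenfunction into an $\ell^2$ function harmonic on $\Z^3\setminus\overline A$, vanishing on $\overline A$ and at infinity, hence identically zero by the maximum principle), so the spectral measure of any vector has no atom at $-1$ and the form bound $u^{T}\Sigma u\ge\tfrac12|u|^2$ survives an Abel-summation limit. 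Better still, you can bypass the delicate direction entirely within your own framework: only the easy bound $\Sigma\preceq CI$ (from your entry estimates, $\|\Sigma\|\le 6k\le 30$) is needed to get $v^{T}\Sigma^{-1}v\gtrsim|v|^2$, while the matching upper bound follows from the probabilistic identification above, since $(\Sigma^{-1}v)_j=\Prob^{x_j}\{T_{\overline B}=\infty\}\le\Prob^{x_j}\{T_{\overline A}=\infty\}=v_j$ gives $v^{T}\Sigma^{-1}v\le|v|^2$ directly. With this patch your argument is complete and constant-explicit.
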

\begin{rmrk}
This is equivalent to $\cp (\overline{B})-\cp (\overline{A}) \eqv \cp^2\left(\overline{A}\right)\hm^2(x, \overline{A})$.
\end{rmrk}
\begin{proof}
It is known (see, e.g., \citep[Proposition 6.5.2]{LawlerRW}) that $\exists\, \varkappa>0$ such that for any compact $K$ and $y\to \infty$,
\begin{equation} \label{localDiscr3dHitAsympt}
\Prob^y\left\{T_K<\infty\right\} = \frac{\varkappa \, \cp(K)}{|y|}\left(1+o(1)
\right). 
\end{equation}
Let $\overline{B} \backslash \overline{A} = \{x_1, \ldots x_k\}$, where $1\leq k\leq 6$.
Observe that for $y\to \infty$ from \eqref{localDiscr3dHitAsympt} we have
\begin{multline} \label{localDiscr3dCapDif}
\frac{\varkappa \left(\cp (\overline{B})-\cp (\overline{A})\right)}{|y|}\left(1+o(1)\right) = \Prob^y\left\{T_{\overline{B}}<\infty\right\}-\Prob^y\left\{T_{\overline{A}}<\infty\right\}
=  \\
= \sum_{j=1}^k \Prob^y\left\{S(T_{\overline{B}})=x_j \right\}\Prob^{x_j}\left\{T_{\overline{A}}=\infty\right\}.
\end{multline}
Note that from \citep[Proposition 6.5.1]{LawlerRW} and \citep[Proposition 6.5.4]{LawlerRW} we have
\begin{equation}\label{localDiscr3dHitPointAsympt}
\Prob^y\left\{S(T_{\overline{B}})=x_j \right\}
\eqv \frac{\Prob^{x_j}\left\{T_{\overline{B}}= \infty \right\}}{|y|}.
\end{equation}

If $l = \argmax_{j}\big\{\Prob^{x_j}\left\{T_{\overline{B}}= \infty \right\}\big\}$, then it is easy to see that
\begin{equation} \label{localDiscr3dEscAsympt1}
\displaystyle\Prob^{x_l}\left\{T_{\overline{B}}= \infty \right\} 
\eqv \Prob^{x_l}\left\{T_{\overline{A}}=\infty\right\}
\gtrsim \Prob^{x_j}\left\{T_{\overline{A}}=\infty\right\}, 
\quad \text{for all } 1\leq j\leq k.
\end{equation}
Indeed, it is obvious that $\Prob^{x_l}\left\{T_{\overline{B}}= \infty \right\} 
\leq \Prob^{x_l}\left\{T_{\overline{A}}=\infty\right\}$,
and by last-exit decomposition for any $1\leq j\leq k$,
\begin{align*}
\displaystyle\Prob^{x_j}\left\{T_{\overline{A}}=\infty\right\}
 &\displaystyle= \sum_{i=1}^k  \Prob^{x_i}\left\{T_{\overline{B}}= \infty \right\}\, G(x_i, x_j, \overline{A})\\
&\displaystyle\leq
\Prob^{x_l}\left\{T_{\overline{B}}= \infty \right\}\sum_{i=1}^k G(x_i, x_j, \{x\}),
\end{align*}
so
$ \Prob^{x_j}\left\{T_{\overline{A}}=\infty\right\}  \lesssim \Prob^{x_l}\left\{T_{\overline{B}}= \infty \right\}$, since $\sum_{i=1}^k G(x_i, x_j, \{x\})\lesssim 1$.

\medskip

Therefore, combining combining \eqref{localDiscr3dCapDif}, \eqref{localDiscr3dHitPointAsympt} and \eqref{localDiscr3dEscAsympt1} we get
\begin{equation*}
\displaystyle\cp (\overline{B})-\cp (\overline{A}) 
\eqv 
\Big(\Prob^{x_l}\left\{T_{\overline{A}}=\infty\right\} \Big)^2
.
\end{equation*}
From $\Prob^{x}\left\{T_{\overline{A}}=\infty\right\} = \frac{1}{6}\sum_j \Prob^{x_j}\left\{T_{\overline{A}}=\infty\right\} $ and \eqref{localDiscr3dEscAsympt1} we deduce that
\begin{equation*}\label{localDiscr3dEscAsympt2}
\displaystyle\Prob^{x_l}\left\{T_{\overline{A}}=\infty\right\} 
\eqv
\Prob^{x}\left\{T_{\overline{A}}=\infty\right\} 
=
\cp(\overline{A})\,\hm\!\left(x, \overline{A} \right).
\end{equation*}
The result now follows, since $\Prob^{x}\left\{T_{\overline{A}}=\infty\right\}$ is smaller than $1$.
\end{proof}

\begin{crl}\label{CrlBeurlIntDiscr3D}
Let $A_k\subset \Z^3$ for $k\in \N$. 
Assume that $A_{k+1}\backslash A_k = \{x_k\}$ and $\hm_k = \hm(x_k, \overline{A_k})$. 
Then for any $n$ and sequence $\left\{ {k_j}\right\}_{j=1}^m$ such that $\forall j\leq m: {k_j}\geq n$, we have
$$\left(\prod_{j=1}^{m}\hm_{k_j}\right)^{1/m} \leq C_1 \left(m\, \cp(A_n)\right)^{-1/2},$$
for some constant $C_1>0$.
\end{crl}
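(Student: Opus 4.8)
The plan is to mirror the two-dimensional argument of Corollary \ref{CrlBeurlIntDiscr2D}, replacing the additive capacity increment of Lemma \ref{LmmCapGrowDiscr2d} with the inverse-capacity increment of Lemma \ref{LmmCapGrowDiscr3d}. Since each step adds a single site $x_k \in \bd A_k$, applying Lemma \ref{LmmCapGrowDiscr3d} with the roles of $A$ and $B$ played by $A_{k_j}$ and $A_{k_j+1} = A_{k_j}\cup\{x_{k_j}\}$ yields, for every attachment index $k_j$,
\[
\hm_{k_j}^2 \eqv \cp^{-1}(\overline{A}_{k_j}) - \cp^{-1}(\overline{A}_{k_j+1}).
\]
In particular each such increment is nonnegative, and this sign is exactly the feature that drives the whole estimate.

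Next I would sum these increments over $j = 1, \dots, m$. Although the attachment times $k_j$ need not be consecutive, they are distinct, and $\cp(\overline{A}_k)$ is nondecreasing in $k$: indeed $\overline{A}_k \subseteq \overline{A}_{k+1}$ and capacity is monotone under inclusion. Hence each drop $\cp^{-1}(\overline{A}_k) - \cp^{-1}(\overline{A}_{k+1})$ is nonnegative, so summing over the distinct sub-collection $\{k_j\}$ is dominated by the full telescoping tail from index $n$:
\[
\sum_{j=1}^m \hm_{k_j}^2 \;\lesssim\; \sum_{k \ge n}\left(\cp^{-1}(\overline{A}_k) - \cp^{-1}(\overline{A}_{k+1})\right) \;\le\; \cp^{-1}(\overline{A}_n).
\]
The final inequality uses only $\cp^{-1}(\overline{A}_k) > 0$, so no appeal to $\cp(\overline{A}_k) \to \infty$ is needed. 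Using monotonicity once more, $\cp(A_n) \le \cp(\overline{A}_n)$, whence $\cp^{-1}(\overline{A}_n) \le \cp^{-1}(A_n)$ and therefore $\sum_{j=1}^m \hm_{k_j}^2 \lesssim \cp^{-1}(A_n)$.

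Finally I would close the argument as in \eqref{ArGeomIneq}, via the inequality between geometric and arithmetic means:
\[
\left(\prod_{j=1}^m \hm_{k_j}\right)^{2/m} \le \frac{1}{m}\sum_{j=1}^m \hm_{k_j}^2 \lesssim \frac{1}{m\,\cp(A_n)},
\]
and taking square roots delivers the claimed bound $\left(\prod_{j=1}^m \hm_{k_j}\right)^{1/m} \le C_1 \left(m\,\cp(A_n)\right)^{-1/2}$, with $C_1$ absorbing the implied constants.

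The heavy lifting has already been done in Lemma \ref{LmmCapGrowDiscr3d}, where the three-dimensional hitting asymptotics \eqref{localDiscr3dHitAsympt}--\eqref{localDiscr3dEscAsympt} produced the $\cp^{-1}$ increment estimate; once that is in hand the corollary is essentially formal. The only genuinely delicate point is the telescoping step: one must verify that passing from the consecutive sum to the sparse, possibly non-adjacent sub-collection $\{k_j\}$ only discards nonnegative terms. This is guaranteed precisely by the distinctness of the $k_j$ together with the sign of the inverse-capacity increments, so that the sparse sum is bounded by the first term $\cp^{-1}(\overline{A}_n)$ of the monotone telescoping tail.
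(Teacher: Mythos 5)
Your proposal is correct and follows essentially the same route as the paper: the paper's proof likewise applies Lemma \ref{LmmCapGrowDiscr3d} to bound $\sum_{j=1}^m \hm_{k_j}^2 \lesssim \cp^{-1}(A_n)$ (via exactly the telescoping-by-monotonicity argument you spell out, which the paper leaves implicit) and then concludes with the geometric--arithmetic mean inequality. Your write-up merely makes explicit the sign and distinctness considerations that justify discarding the non-selected increments, which is a fair expansion of the paper's terse one-line reduction.
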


\begin{proof}
By Lemma \ref{LmmCapGrowDiscr3d},
\begin{equation*}
\sum_{j=1}^{m}\hm^2_{k_j} \lesssim \frac{1}{\cp(A_n)}.
\end{equation*}
Then we apply the inequality between geometric and arithmetic means%
\begin{equation*}
\left(\prod_{j=1}^{m}\hm_{k_j}\right)^{1/m} 
= \left(\prod_{j=1}^{m}\hm^2_{k_j}\right)^{1/(2m)}
\leq \left( \frac{\sum_{j=1}^{m}\hm^2_{k_j}}{m}\right)^{1/2}
\leq \left(\frac{1}{m\, \cp(A_n)}\right)^{1/2}.
\end{equation*}
\end{proof}

To proceed, we need the following analogue of Beurling's estimate for $\Z^3$.
\begin{prp}[\citep{LawlerInersections}, Theorem 2.5.2] \label{BeurlingDiscr3D}
For any compact $A\subset \Z^3$ and any $x\in \bd A$ we have $$\hm(x, \overline{A})\lesssim \frac{(\log \rad(A))^{1/2}}{\rad(A)}.$$
\end{prp}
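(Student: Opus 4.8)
The plan is to deduce the estimate from two separate bounds, one on the capacity of the cluster and one on an escape probability, and to exploit the connectedness of $A$, which is essential here: for a disconnected set such as two points at distance $R$ the bound fails outright, so throughout I would use that $A$ is edge-connected, as every DBM cluster is. Writing $\mathrm{es}(x)=\Prob^x\{T_{\overline A}=\infty\}$ for the non-return probability of $x\in\bd A\subset\overline A$, the hitting asymptotics already recorded in \eqref{localDiscr3dHitAsympt} and \eqref{localDiscr3dHitPointAsympt} give, upon conditioning a far-away walk on hitting $\overline A$, the identity
$$\hm(x,\overline A)\eqv\frac{\mathrm{es}(x)}{\cp(\overline A)}.$$
So it suffices to bound numerator and denominator. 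For the denominator, connectedness and the lower bound in \eqref{CapRad3DDiscr} give $\cp(\overline A)\gtrsim R/\log R$ with $R=\rad(A)$, and the whole proposition reduces to the escape-probability estimate $\mathrm{es}(x)\lesssim(\log R)^{-1/2}$.

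For the escape bound I would run a multi-scale argument on the dyadic shells $U_k=\{2^k\le|y-x|<2^{k+1}\}$ for $0\le k\lesssim\log_2 R$ centered at $x$. Since $A$ is connected, meets a neighbour of $x$, and reaches radius $R$, it intersects every sphere about $x$ up to radius $\eqv R$; hence the part of $A$ in the annulus around $U_k$ is a connected set of diameter $\gtrsim 2^k$, which in $\Z^3$ has capacity $\gtrsim 2^k/k$. A walk crossing $U_k$ at distance $\eqv 2^k$ from this piece therefore gets absorbed with probability $\gtrsim 1/k$, by \eqref{localDiscr3dHitAsympt}. To reach scale $\eqv R$ without hitting $\overline A$ the walk must traverse all $\eqv\log R$ shells surviving each one, and from scale $\eqv R$ onward escape to infinity only costs a further factor $1-O(1/\log R)$, so it is enough to control survival across the shells.

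The main obstacle is precisely this combination, because the per-shell survivals are \emph{positively} correlated: a walk that survives one shell by being far from $A$ is favorably placed to survive the next. Consequently one cannot conclude $\mathrm{es}(x)\le\prod_k(1-c/k)$ from the per-shell bounds — that product is a lower, not an upper, bound on the escape probability, and the correlations are exactly what degrade it up to the true order $(\log R)^{-1/2}$. To obtain a legitimate upper bound I would group the shells into blocks, apply the strong Markov property at the first entrance point of each block, and use Harnack-type control to make the absorption probability over a block uniform in the entrance location; this replaces the spurious shellwise independence by genuine independence across blocks. Tracking the number of blocks against the decay contributed by each is what both tames the correlations and pins the exponent at $1/2$ rather than some other power of $\log R$, as in the treatment of Theorem 2.5.2 in \citep{LawlerInersections}. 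The $\Z^3$ capacity estimate for connected sets and the $3$-dimensional Green's function asymptotics are the quantitative inputs feeding this step.
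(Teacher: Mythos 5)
Your reduction is sound, and it is in fact the standard skeleton behind the cited result: in $\Z^3$ one has the exact identity $\hm(x,\overline A)=\mathrm{es}(x)/\cp(\overline A)$ with $\mathrm{es}(x)=\Prob^x\{T_{\overline A}=\infty\}$ (this is what \eqref{localDiscr3dHitAsympt} and \eqref{localDiscr3dHitPointAsympt} encode), the capacity lower bound $\cp(\overline A)\gtrsim R/\log R$ for connected sets is \eqref{CapRad3DDiscr}, and the Proposition is thereby equivalent to the escape estimate $\mathrm{es}(x)\lesssim(\log R)^{-1/2}$. Two of your side observations are also worth crediting: the paper itself does not prove this Proposition but imports it from \citep{LawlerInersections}, and you are right that connectedness is indispensable --- indeed the statement as printed (``any compact $A$'') is too generous, since it also implicitly needs $A$ to reach from near the origin out to radius $R$ (a single faraway point is connected, has $\rad(A)=R$, and carries harmonic measure $1$); this is harmless for the paper because the result is only applied to DBM clusters containing the origin, as in Lemma \ref{lemmaDenomDiscr3D}.

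The genuine gap is the exponent $1/2$, which is the entire content of the hard half of the theorem. Your multi-scale setup correctly produces per-shell absorption probabilities of order $1/k$ at scale $2^k$, and you correctly diagnose that the naive product over shells is only a lower bound on the escape probability because of positive correlations. But the repair you propose --- blocking, strong Markov at block entrances, Harnack-type uniformization over the entrance point --- can only deliver $\mathrm{es}(x)\lesssim(\log R)^{-c}$ for some unspecified $c>0$: uniformizing forces you to use the worst-case entrance (say, a walk entering a shell antipodally to the connected crossing piece), the resulting per-block decay constant is strictly and uncontrollably smaller than $1/2$, and enlarging the blocks does not help since the worst-entrance survival across a block is still governed by the same small per-shell constants. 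Your argument therefore proves only $\hm(x,\overline A)\lesssim(\log R)^{1-c}/R$, strictly weaker than the Proposition; and since the stated bound is attained up to constants at the tip of a segment (as the Remark following the Proposition notes), there is no slack --- any proof must capture the exponent exactly. Doing so requires controlling how the conditioned-to-survive walk is distributed relative to $A$ across scales, i.e.\ precisely the correlation structure you set aside; that is the substance of the proof of Theorem 2.5.2 in \citep{LawlerInersections}, and your sketch asserts this step (``pins the exponent at $1/2$'') without supplying a mechanism for it.
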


\begin{rmrk}
The estimate in Proposition \ref{BeurlingDiscr3D} is sharp up to a multiplicative constant. The equality holds up to a multiplicative constant for the end points of segment $[0, r]\times\left\{0\right\}\times\left\{0\right\}$.
\end{rmrk}

\begin{rmrk}
We need Proposition \ref{BeurlingDiscr3D} only in order to estimate the normalizing term $\sum_{x\in \bd A} \hm^{\pr}(x, \overline{A})$ for $\pr<1$. For $\pr \geq 1 $ (and for DLA in particular) we will only need an integral analogue from Corollary \ref{CrlBeurlIntDiscr3D}.
\end{rmrk}

\begin{lmm} \label{lemmaDenomDiscr3D}
There exists $C_2 = C_2(\pr)>0$ such that for any connected set $A\subset \Z^3$ the following holds.  
\begin{enumerate}
\item For $\pr\geq 1$ we have $\sum_{x\in \bd A} \hm^{\pr}(x, \overline{A}) 
\geq  C_2\,|A|^{1-\pr}$.
\item For $\pr < 1$ we have  $\sum_{x\in \bd A} \hm^{\pr}(x, \overline{A}) 
\geq C_2\, r^{1-\pr}(\log r)^{(\pr-1)/2}$, where $r = \rad (A)$.
\end{enumerate}
\end{lmm}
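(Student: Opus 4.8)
The plan is to split into the two regimes and exploit only the normalization $\sum_{x\in\bd A}\hm(x,\overline{A})=1$, combined with a convexity argument when $\pr\geq 1$ and with Beurling's estimate (Proposition \ref{BeurlingDiscr3D}) when $\pr<1$. For the first case I would write $N=|\bd A|$ and apply Jensen's inequality (equivalently the power–mean inequality) to the uniform average of the $N$ values $\hm(x,\overline{A})$, using that $t\mapsto t^{\pr}$ is convex for $\pr\geq 1$:
\begin{equation*}
\frac{1}{N}\sum_{x\in\bd A}\hm(x,\overline{A})^{\pr}\geq\left(\frac{1}{N}\sum_{x\in\bd A}\hm(x,\overline{A})\right)^{\pr}=N^{-\pr},
\end{equation*}
so that $\sum_{x\in\bd A}\hm^{\pr}(x,\overline{A})\geq N^{1-\pr}$. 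Since every vertex of $\Z^3$ has exactly $6$ neighbours, each $y\in\bd A$ is a neighbour of some point of $A$, giving $N=|\bd A|\leq 6|A|$; and because $1-\pr\leq 0$ the map $t\mapsto t^{1-\pr}$ is nonincreasing, whence $N^{1-\pr}\geq(6|A|)^{1-\pr}=6^{1-\pr}|A|^{1-\pr}$. Taking $C_2=6^{1-\pr}$ settles part $(1)$.

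For $\pr<1$ the function $t\mapsto t^{\pr-1}$ is decreasing, so I would feed the Beurling bound $\hm(x,\overline{A})\leq C\,(\log r)^{1/2}/r$ of Proposition \ref{BeurlingDiscr3D} (with $r=\rad(A)$) directly into the exponent $\pr-1$. For every boundary site with $\hm(x,\overline{A})>0$ this gives $\hm(x,\overline{A})^{\pr-1}\geq\bigl(C(\log r)^{1/2}/r\bigr)^{\pr-1}$; multiplying by $\hm(x,\overline{A})$, summing, and again using $\sum_{x}\hm(x,\overline{A})=1$ yields
\begin{equation*}
\sum_{x\in\bd A}\hm(x,\overline{A})^{\pr}=\sum_{x\in\bd A}\hm(x,\overline{A})\,\hm(x,\overline{A})^{\pr-1}\geq\left(\frac{C(\log r)^{1/2}}{r}\right)^{\pr-1}=C^{\pr-1}\,r^{1-\pr}(\log r)^{(\pr-1)/2},
\end{equation*}
which is exactly the claimed inequality with $C_2=C^{\pr-1}$. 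Boundary sites with zero harmonic measure contribute $0$ to the sum and may simply be discarded.

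I do not expect a serious obstacle here: neither case involves heavy computation, and essentially all of the analytic content of part $(2)$ is imported from the sharp Beurling estimate, which is the genuinely nontrivial input. The only points demanding care are bookkeeping ones — namely that the exponents $1-\pr$ and $\pr-1$ are negative, so the monotonicity of the relevant power maps flips the direction of the inequalities, and that the convexity direction in Jensen's inequality is precisely what restricts the elementary argument to $\pr\geq 1$. This is also why the two regimes genuinely require different tools: for $\pr<1$ one cannot avoid an a priori upper bound on the maximal harmonic measure, which is what Proposition \ref{BeurlingDiscr3D} supplies.
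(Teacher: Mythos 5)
Your proposal is correct and takes essentially the same route as the paper: your uniform-weight Jensen step for $\pr\geq 1$ is just the power-mean form of the H\"older inequality the paper applies (both then use $|\bd A|\leq 6|A|$), and for $\pr<1$ both arguments bound $\hm(x,\overline{A})^{\pr-1}$ from below via the Beurling estimate of Proposition \ref{BeurlingDiscr3D} and sum against the harmonic-measure weights. Nothing further is needed.
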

\begin{proof}
For $\pr\geq 1$ we use the H{\"o}lder inequality
\begin{equation*}
|\bd A|^{\frac{\pr-1}{\pr}}\left(\sum_{x\in \bd A} \hm^{\pr}(x, \overline{A})\right)^{\frac{1}{\pr}} \geq \sum_{x\in \bd A} \hm(x, \overline{A}) = 1.
\end{equation*} 
So,
\begin{equation*}
\sum_{x\in \bd A} \hm^{\pr}(x, \overline{A}) \geq |\bd A|^{1-\pr} > C_2|A|^{1-\pr},
\end{equation*} 
since $|\bd A|\leq 6|A|$.
\medskip

For $\pr < 1$ we observe that
\begin{equation*}
\sum_{x\in \bd A} \hm^{\pr}(x, \overline{A}) 
\geq \max_{y\in \bd A}\left\{\hm(y, \overline{A})\right\}^{\pr-1}\sum_{x\in \bd A} \hm(x, \overline{A})
= \max_{y\in \bd A}\left\{\hm(y, \overline{A})\right\}^{\pr-1},
\end{equation*}
and use Beurling's estimate from Proposition \ref{BeurlingDiscr3D}. 
\end{proof}

Now we combine Corollary \ref{CrlBeurlIntDiscr3D} and Lemma \ref{lemmaDenomDiscr3D} to prove Theorem \ref{ThrDiscr3D}.

\begin{proof}[Proof of Theorem \ref{ThrDiscr3D}]

\medskip

For $\pr<1$, the proof is similar to the proof of Theorem \ref{ThrDiscr2D}. We again consider the first branch that reaches the circle of radius $2R$ and starts at the circle of radius $R$. We estimate the probability that such a branch is formed after $M$ particles have arrived in a similar way, as we did in the proof of Theorem \ref{ThrDiscr2D}. The only difference is that we use Corollary \ref{CrlBeurlIntDiscr3D} and Lemma \ref{lemmaDenomDiscr3D} instead of Corollary \ref{CrlBeurlIntDiscr2D} and Lemma \ref{MakApplDiscr}.

\medskip

Assume $\pr\geq1$. Let $R= \rad(A_N)$.
We will assume that $N$ is sufficiently large, so that $R>10$.
Set
\begin{equation*}
M :=\left\lfloor \widetilde{C}\frac{R^{\frac{1+\pr}{\pr}}}{(\log R)^{1/2}}\right \rfloor
\end{equation*}
for a small constant $\widetilde{C}$ given by
\begin{equation*}
\widetilde{C} =  (10^6 C_0)^{-1},
\end{equation*} 
where $C_0 = C_1^{\pr}C_2^{-1}$  for the constants $C_1$ and $C_2$ appearing in Corollary \ref{CrlBeurlIntDiscr3D} and Lemma \ref{lemmaDenomDiscr3D}.

We want to estimate $\Prob\left\{\rad(A_{M})>2R\right\}$.

If $\rad(A_M)>2R$ then $M\geq N$.
Analogously to the proof of Theorem \ref{ThrDiscr2D} we write 
\begin{equation}\label{eq3DProofMain}
\Prob\left\{\rad(A_{M})>2R \right\}
\leq \sum_{L=R}^{M} \binom{M}{L} 100 \, R^2\: 7^{L}\prod_{j=1}^{L}\frac{\hm^{\pr}(x_j, \overline{A}_{k_j})}{\sum_{y \in \bd A_{k_j}}\hm^{\pr}(y, \overline{A}_{k_j})},
\end{equation}
where the factor $100 R^2$ accounts for the starting point of the branch, which is chosen on the sphere of radius $R$.
We use Corollary \ref{CrlBeurlIntDiscr3D} and \eqref{CapRad3DDiscr} to get
\begin{equation*}
\prod_{j=1}^{L}\hm^{\pr}(x_j, \overline{A}_{k_j})
\leq
C_1^{\pr L} L^{-\pr L/2}\prod_{j=1}^{L} \cp(\overline{A}_{k_j})^{-\pr/2}
\leq
C_1^{\pr L} L^{-\pr L/2} \left(\frac{\log R}{R} \right)^{L\pr},
\end{equation*}
where we used that $(\log R)/R$ is decreasing for $R>10$.
From Lemma \ref{lemmaDenomDiscr3D} we get that
\begin{equation*}
\prod_{j=1}^{L}\frac{1}{\sum_{y \in \bd A_{k_j}}\hm^{\pr}(y, \overline{A}_{k_j})}
\leq
C_2^{-L} \prod_{j=1}^{L}|A_{k_j}|^{(\pr-1) }
\leq
C_2^{-L} M^{(\pr-1)L}.
\end{equation*}

Therefore, it follows from \eqref{eq3DProofMain} that
\begin{equation*}
\Prob\left\{\rad(A_{M})>2R \right\}
\leq
\sum_{L=R}^{M} \binom{M}{L} 100\, R^2\: 7^{L}\,
C_0^{L}\frac{\left(\log R\right)^{\pr L/2}M^{(\pr-1)L}}{R^{\pr L/2}L^{\pr L/2}}
\end{equation*}

Analogously to the proof of Theorem \ref{ThrDiscr2D} we obtain that for the chosen $\widetilde{C}$ and sufficiently large $R$,
\begin{equation*}
\Prob\left\{\rad(A_{M})>2R \right\}<2^{-R}.
\end{equation*}

Thus, by Borel-Cantelli lemma we get the desired result.
\end{proof}

\appendix

\section{Appendix}\label{sect_app}
Below we show that in 2 dimensions our definition of $G(x, A) = G(x, \infty, A)$ as the limit $\lim_{y\to \infty}G(x, y, A)$ agrees with the definition of Green's function from \citep[Proposition~6.4.7]{LawlerRW}.
\begin{prp}
In 2 dimensions for any finite set $A\subset \Z^2$ we have 
$$G(x, A) = a(x-y)-\E^x\left[a\left(S\!\left(\overline{T_A}\right)-y\right)\right],$$
for any $y\in A$.
\end{prp}
\begin{proof}
By \citep[Section~14, Theorem~3]{Spitzer_book} we have that
\begin{equation}\label{eq_app_1}
G(x, A)= \sum_{z\in A}a(x-z)w_A(z) - \cp(A).
\end{equation}
By the definition of the capacity \citep[Chapter 6.6]{LawlerRW} we get that for any $u\in A$,
$$\sum_{z\in A}a(u-z)w_A(z) = \cp(A).$$ 
Therefore, for any $u\in A$, $G(u, A)=0$. 
Moreover, combining \eqref{eq_app_1} and an asymptotic expansion of $a(x)$ \citep[Theorem 4.4.4]{LawlerRW} we get that
\begin{equation}\label{eq_app_2}
G(x, A) = a(x) +O(1), \qquad \text{as } x\to \infty.
\end{equation}
Thus, by \citep[Proposition~6.4.8]{LawlerRW}, there exists a constant $C\in \R$, such that
\begin{equation*}
G(x, A) = C \Big(a(x-y)-\E^x\left[a\left(S\!\left(\overline{T_A}\right)-y\right)\right]\Big),
\end{equation*} 
for any $y\in A$.
However, we also observe that
\begin{equation*}
a(x-y)-\E^x\left[a\left(S\!\left(\overline{T_A}\right)-y\right)\right] =  a(x) +O(1), \qquad \text{as } x\to \infty.
\end{equation*}
Therefore, combining this with \eqref{eq_app_2} we get that $C=1$, which finishes the proof.
\end{proof}

\section*{Declarations}

\textbf{Funding:}
Both authors are grateful to Swiss NSF and NCCR SwissMAP for financial support.

\bibliography{DBM}

\end{document}